\title{Forking and dividing in Henson graphs}
\author{Gabriel Conant}
\address{Department of Mathematics, Statistics and Computer Science\\
University of Illinois at Chicago\\
Chicago, IL, 60607, USA}
\email{gconan2@uic.edu}
\newtheorem{theorem}{Theorem}[section]
\newtheorem{proposition}[theorem]{Proposition}
\newtheorem{lemma}[theorem]{Lemma}
\newtheorem{corollary}[theorem]{Corollary}
\newtheorem*{theorem*}{Theorem}
\theoremstyle{definition}
\newtheorem{definition}[theorem]{Definition}
\newtheorem{construction}[theorem]{Construction}
\newtheorem{fact}[theorem]{Fact}
\newtheorem{question}[theorem]{Question}
\newtheorem{fact/def}[theorem]{Fact/Definition}
\newcommand{\claim}{$_{\sslash}$}
\newcommand{\pR}{\makebox[.14in]{$R$}}
\newcommand{\nR}{\makebox[.14in]{$\not\!\! R$}}
\newcommand{\nrhd}{\!\not\!\rhd}
\def\abar{\bar{a}}
\def\bbar{\bar{b}}
\def\cbar{\bar{c}}
\def\cH{\mathcal{H}}
\def\cL{\mathcal{L}}
\def\dbar{\bar{d}}
\def\func{\longrightarrow}
\def\G{\mathbb{G}}
\def\H{\mathbb{H}}
\def\id{\operatorname{id}}
\def\M{\mathbb{M}}
\def\NIP{\operatorname{NIP}}
\def\NSOP{\operatorname{NSOP}}
\def\NTP{\operatorname{NTP}}
\def\seq{\subseteq}
\def\SOP{\operatorname{SOP}}
\def\Th{\operatorname{Th}}
\def\TP{\operatorname{TP}}
\def\tp{\operatorname{tp}}
\def\vphi{\varphi}
\def\xbar{\bar{x}}
\def\ybar{\bar{y}}
\def\Ind{\setbox0=\hbox{$x$}\kern\wd0\hbox to 0pt{\hss$\mid$\hss}
\lower.9\ht0\hbox to 0pt{\hss$\smile$\hss}\kern\wd0}
\def\Notind{\setbox0=\hbox{$x$}\kern\wd0\hbox to 0pt{\mathchardef
\nn=12854\hss$\nn$\kern1.4\wd0\hss}\hbox to
0pt{\hss$\mid$\hss}\lower.9\ht0 \hbox to 0pt{\hss$\smile$\hss}\kern\wd0}
\def\ind{\mathop{\mathpalette\Ind{}}}
\def\nind{\mathop{\mathpalette\Notind{}}}
\begin{document}

\begin{abstract}
For $n\geq 3$, define $T_n$ to be the theory of the generic $K_n$-free graph, where $K_n$ is the complete graph on $n$ vertices. We prove a graph theoretic characterization of dividing in $T_n$, and use it to show that forking and dividing are the same for complete types. We then give an example of a forking and nondividing formula. Altogether, $T_n$ provides a counterexample to a recent question of Chernikov and Kaplan. 
\end{abstract}

\maketitle

\section{Introduction}

Classification in model theory, beginning with stability theory, is strongly fueled by the study of abstract notions of independence, the frontrunners of which are forking and dividing. These notions have proved useful in the abstract treatment of independence and dimension in the stable setting, and initiated a quest to understand when they are useful in the unstable context. Significant success was achieved in the class of simple theories (see \cite{KiPi}). Meaningful results have also been found for $\NIP$ theories and, more generally, $\NTP_2$ theories, which include both simple and $\NIP$. A notable example is the following recent result from \cite{ChKa}.

\begin{theorem*}\label{ChKa}
Suppose $\M$ is a sufficiently saturated monster model of an $\NTP_2$ theory. Given $C\subset\M$, the following are equivalent.
\begin{enumerate}[$(i)$]
\item A partial type forks over $C$ if and only if it divides over $C$.
\item $C$ is an \textbf{extension base for nonforking}, i.e. if $\pi(\xbar)$ is a partial type with parameters from $C$, then $\pi(\xbar)$ does not fork over $C$.
\end{enumerate}
\end{theorem*}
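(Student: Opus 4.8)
The plan is to prove the two implications separately, noting that the forward direction is soft while the reverse carries all of the model-theoretic weight. For $(i)\Rightarrow(ii)$ I would first record the elementary fact that a consistent partial type $\pi(\bar{x})$ with parameters from $C$ never divides over $C$: any formula $\delta(\bar{x})$ dividing over $C$ is witnessed by a nonconstant $C$-indiscernible sequence of conjugates of its parameter, but if $\pi\vdash\delta$ then, by compactness, a single formula $\chi(\bar{x})$ over $C$ already implies $\delta$; applying $C$-automorphisms that slide the parameter of $\delta$ along its witnessing sequence fixes $\chi$ and forces $\chi$ to imply every conjugate of $\delta$, contradicting $k$-inconsistency. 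Granting this, if $(i)$ holds then every partial type over $C$, failing to divide, also fails to fork, which is exactly the assertion that $C$ is an extension base. This direction is purely formal and uses no hypothesis on the theory.

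The substance is $(ii)\Rightarrow(i)$. Since dividing always implies forking, and since a forking partial type implies, by compactness, a single forking formula built from finitely many of its members, it suffices to prove that every formula $\varphi(\bar{x},\bar{a})$ that forks over $C$ already divides over $C$. I would route this through strict Morley sequences. First, using that $C$ is an extension base, I would produce over $C$ a global strictly nonforking (strictly invariant) extension of any given type and extract from it a strict Morley sequence; this is the only place the extension-base hypothesis enters. The crucial input is then the $\NTP_2$ analogue of Kim's Lemma: if $\psi(\bar{x},\bar{b})$ divides over $C$ and $(\bar{b}_i)_{i<\omega}$ is a strict Morley sequence over $C$ with $\bar{b}_0=\bar{b}$, then $\{\psi(\bar{x},\bar{b}_i):i<\omega\}$ is inconsistent.

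Granting these ingredients, the argument closes by contradiction. Suppose $\varphi(\bar{x},\bar{a})\vdash\bigvee_{l<m}\delta_l(\bar{x},\bar{b}_l)$ with each $\delta_l$ dividing over $C$, and suppose toward a contradiction that $\varphi(\bar{x},\bar{a})$ does not divide over $C$. Bundling the witnesses $\bar{b}_l$ into $\bar{a}$, take a strict Morley sequence $(\bar{a}_i)_{i<\omega}$ over $C$ with $\bar{a}_0=\bar{a}$, so that $\varphi(\bar{x},\bar{a}_i)\vdash\bigvee_{l<m}\delta_l(\bar{x},\bar{b}_l^i)$ for each $i$. Because $\varphi(\bar{x},\bar{a})$ does not divide and the sequence is $C$-indiscernible with $\bar{a}_0=\bar{a}$, the set $\{\varphi(\bar{x},\bar{a}_i):i<\omega\}$ is consistent, realized by some $\bar{d}$. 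For each $i$ some disjunct holds, so a pigeonhole argument fixes an index $l^\ast$ and an infinite set $I$ with $\bar{d}\models\delta_{l^\ast}(\bar{x},\bar{b}_{l^\ast}^i)$ for all $i\in I$. But $(\bar{b}_{l^\ast}^i)_{i\in I}$ is again a strict Morley sequence over $C$ (monotonicity of strict nonforking passes to the relevant subtuples and subsequence), and $\delta_{l^\ast}(\bar{x},\bar{b}_{l^\ast}^i)$ divides over $C$; the $\NTP_2$ Kim's Lemma then makes $\{\delta_{l^\ast}(\bar{x},\bar{b}_{l^\ast}^i):i\in I\}$ inconsistent, contradicting $\bar{d}$.

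The main obstacle is clearly the $\NTP_2$ Kim's Lemma together with the existence theory for strict Morley sequences on which it rests. I expect to establish the former via the \emph{broom lemma}: if $\varphi(\bar{x},\bar{a})$ implies a disjunction of formulas dividing over $C$ together with $C$-conjugates $\varphi(\bar{x},\bar{a}_j)$ of itself, then one can sweep away the dividing disjuncts and conclude $\varphi(\bar{x},\bar{a})\vdash\bigvee_j\varphi(\bar{x},\bar{a}'_j)$ for further conjugates. The delicate point is the inductive bookkeeping in the broom lemma, where $\NTP_2$ is used precisely to prevent an array of mutually indiscernible, row-wise $k$-inconsistent instances from remaining simultaneously consistent — that is, to rule out the very $\TP_2$ pattern whose absence defines the class.
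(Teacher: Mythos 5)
You should first be aware that the paper offers no proof of this statement: it is the theorem of Chernikov and Kaplan, quoted from \cite{ChKa} purely as motivation (the paper's purpose is to exhibit an $\NSOP_4$ theory where the equivalence fails outside $\NTP_2$), so there is no in-paper argument to compare yours against. Judged against the actual proof in \cite{ChKa}, your outline reconstructs it faithfully. The direction $(i)\Rightarrow(ii)$ is correct, though you take a detour: it is enough to observe that a $C$-indiscernible sequence of tuples from $C$ is constant, so a consistent partial type over $C$ cannot divide over $C$ (this is the paper's Proposition \ref{facts}(b)); given $(i)$ it then cannot fork. For $(ii)\Rightarrow(i)$, the reduction to a single formula, the bundling of the dividing witnesses into the parameter, the consistency of $\{\vphi(\xbar,\abar_i):i<\omega\}$ along an indiscernible sequence when $\vphi(\xbar,\abar)$ does not divide, the pigeonhole on the disjuncts, and the final appeal to Kim's Lemma along a strict Morley sequence is exactly the Chernikov--Kaplan argument, and your remark that strict Morley sequences pass to subtuples and infinite subsequences is the correct (routine) monotonicity check.

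That said, the proposal is a roadmap rather than a proof: the two ingredients you black-box --- existence of strictly invariant types (hence strict Morley sequences) over an extension base, and Kim's Lemma for $\NTP_2$ --- are precisely where all of the work in \cite{ChKa} lives, and you do not supply either. One attribution should also be corrected: the broom lemma is what yields the existence of strictly invariant extensions over extension bases; Kim's Lemma itself is proved by a direct array construction in which a consistent path through mutually indiscernible, row-wise $k$-inconsistent rows would witness $\TP_2$. Your closing sentence describes that array argument accurately but attaches it to the broom lemma rather than to Kim's Lemma. With those two results imported as cited facts, your argument closes correctly.
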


In general, if condition $(i)$ holds for a set $C$, then condition $(ii)$ does as well. In fact, condition $(ii)$ should be thought of as the minimal requirement for nonforking to be meaningful for types over $C$. In particular, if $C$ is \textit{not} an extension base for nonforking, then there are types with no nonforking extensions. There are few examples where condition $(ii)$ fails, and most of them do so by exploiting some kind of circular ordering (see e.g \cite[Exercise 7.1.6]{TeZi}). On the other hand, condition $(i)$ is, \textit{a priori}, harder to achieve. It is useful because it allows us to ignore the subtlety of forking versus dividing. However, in every textbook example where condition $(i)$ fails, it is because condition $(ii)$ also fails. This leads to the natural question, which is asked in \cite{ChKa}, of whether the result above extends to classes of theories other than $\NTP_2$. In this paper, we give an example of an $\NSOP_4$ theory in which condition $(ii)$ holds for all sets, but condition $(i)$ fails.

We will consider forking and dividing in the theory of a well-known structure: the generic $K_n$-free graph, also known as the \textit{Henson graph}, a theory with $\TP_2$ and $\NSOP_4$. Our main goal is to characterize forking and dividing in the theory of the Henson graph. We will show that dividing independence has a meaningful graph-theoretic interpretation, and has something to say about the combinatorics of the structure. Using this characterization, we will show that despite the complexity of the theory, forking and dividing are the same for complete types. As a consequence, every set is an extension base for nonforking, and so nonforking/nondividing extensions always exist. On the other hand, we will show that there are formulas which fork, but do not divide.

\subsection*{Acknowledgements} I would like to thank Lynn Scow, John Baldwin, Artem Chernikov, Dave Marker, Caroline Terry, and Phil Wesolek for their part in the development of this project.

\section{Model Theoretic Preliminaries} \label{model theory}

This section contains the definitions and basic facts concerning forking and dividing. We first specify some conventions that will be maintained throughout the paper. If $T$ is a complete first order theory and $\M$ is a monster model of $T$, we write $A\subset\M$ to mean that $A$ is a ``small" subset of $\M$, i.e. $A\seq\M$ and $\M$ is $|A|^+$-saturated. We use the letters $a,b,c,\ldots$ to denote singletons, and $\abar,\bbar,\cbar,\ldots$ to denote tuples (of possibly infinite length).

\begin{definition} Suppose $C\subset\M$, $\pi(\xbar,\ybar)$ is a partial type with parameters from $C$, and $\bbar\in\M$. 
\begin{enumerate}
\item $\pi(\xbar,\bbar)$ \textbf{divides over $C$} if there is a $C$-indiscernible sequence $(\bbar^l)_{l<\omega}$, with $\bbar^0=\bbar$, such that $\bigcup_{l<\omega}\pi(\xbar,\bbar^l)$ is inconsistent.
\item $\pi(\xbar,\bbar)$ \textbf{forks over $C$} if there is some $D\supseteq \bbar C$ such that if $p\in S_n(D)$ extends $\pi(\xbar,\bbar)$ then $p$ divides over $C$. 
\item A formula $\vphi(\xbar,\bbar)$ \textbf{forks} (resp. \textbf{divides}) over $C$ if $\{\vphi(\xbar,\bbar)\}$ forks (resp. divides) over $C$.
\end{enumerate}
\end{definition}

The following basic facts can be found in \cite{TeZi}.

\begin{proposition}\label{facts} Let $C\subset\M$.
\begin{enumerate}[(a)]
\item If a complete type forks (resp. divides) over $C$ then it contains some formula that forks (resp. divides) over $C$.
\item If $\pi(\xbar)$ is a consistent type over $C$ then $\pi(\xbar)$ does not divide over $C$.
\end{enumerate}
\end{proposition}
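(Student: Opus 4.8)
The plan is to treat (b) directly and to derive (a) in two stages, first for dividing and then bootstrapping to forking. Throughout I write a complete type as $p(\xbar,\bbar)$, collecting all of its parameters into a single (possibly infinite) tuple $\bbar$, and I use freely that $p$ is closed under finite conjunction. For part (b), suppose $\pi(\xbar)$ is consistent with all parameters in $C$; write it as $\pi(\xbar,\bbar)$ with $\bbar\in C$. If $(\bbar^l)_{l<\omega}$ is any $C$-indiscernible sequence with $\bbar^0=\bbar$, then each $\bbar^l\equiv_C\bbar$; but $\tp(\bbar/C)$ contains the formulas $x_i=b_i$ for the entries $b_i\in C$ of $\bbar$, so $\bbar^l=\bbar$ for every $l$. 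Hence the only candidate witness is the constant sequence, for which $\bigcup_{l<\omega}\pi(\xbar,\bbar^l)=\pi(\xbar,\bbar)$ is consistent. So no sequence witnesses dividing and $\pi$ does not divide over $C$.

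For the dividing half of (a), suppose the complete type $p(\xbar,\bbar)$ divides over $C$, witnessed by a $C$-indiscernible sequence $(\bbar^l)_{l<\omega}$ with $\bbar^0=\bbar$ and $\bigcup_{l<\omega}p(\xbar,\bbar^l)$ inconsistent. By compactness some finite subset is inconsistent; this subset mentions only finitely many formulas $\theta_1(\xbar,\bbar),\ldots,\theta_r(\xbar,\bbar)$ of $p$ (each possibly occurring along several copies of the sequence). Put $\psi(\xbar,\bbar)=\bigwedge_{i\le r}\theta_i(\xbar,\bbar)$, so $\psi\in p$. Since $\psi(\xbar,\bbar^l)$ implies every $\theta_i(\xbar,\bbar^l)$, the family $\{\psi(\xbar,\bbar^l):l<\omega\}$ is already inconsistent, and the same sequence $(\bbar^l)$ witnesses that $\psi$ divides over $C$. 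Thus $p$ contains the dividing formula $\psi$.

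For the forking half of (a), let $p(\xbar,\bbar)$ be complete and forking over $C$, so by definition there is $D\supseteq\bbar C$ such that every completion of $p$ in $S_n(D)$ divides over $C$. Consider the partial type $\Sigma(\xbar)=p(\xbar)\cup\{\neg\psi(\xbar,\dbar):\psi(\xbar,\dbar)\text{ a }D\text{-formula dividing over }C\}$. If $\Sigma$ were consistent it would have a completion $q\in S_n(D)$; but $q$ extends $p$, hence divides over $C$, hence by the dividing half of (a) contains some $D$-formula dividing over $C$, contradicting the corresponding $\neg\psi$ in $\Sigma$. So $\Sigma$ is inconsistent, and compactness yields a finite $p_0\seq p$ and finitely many $\psi_1,\ldots,\psi_m$, each dividing over $C$, with $\bigwedge p_0\vdash\bigvee_{j\le m}\psi_j$. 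Setting $\chi=\bigwedge p_0\in p$ and letting $D'$ consist of $C$ together with the parameters of $\chi,\psi_1,\ldots,\psi_m$, every completion of $\chi$ in $S_n(D')$ contains some $\psi_j$ and therefore divides over $C$; hence $\chi$ forks over $C$, and $p$ contains the forking formula $\chi$.

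The routine content is the compactness argument and the conjunction trick. The one genuinely technical point, which I would isolate as a lemma and which is the forking-side counterpart of the elementary dividing fact just proved, is the implication used at the end: a complete type containing a formula that divides over $C$ must itself divide over $C$. This is monotonicity of dividing under extension, and it is where the work lies — given a $C$-indiscernible sequence witnessing that some $\psi(\xbar,\bar{e})$ divides, one must extend it to a $C$-indiscernible sequence of the full parameter tuples while preserving the inconsistent subfamily, a standard if slightly fiddly manipulation of indiscernibles via Ramsey and compactness. Alternatively one may simply cite the equivalence of the two standard formulations of forking from \cite{TeZi}.
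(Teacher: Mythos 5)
The paper gives no proof of this proposition and simply cites Tent--Ziegler; your argument is the standard one found there and is correct. The only genuinely nontrivial ingredient --- that a complete type containing a formula which divides over $C$ itself divides over $C$, which requires stretching the witnessing $C$-indiscernible sequence of the formula's parameters to one of the full parameter tuples --- is correctly isolated and may legitimately be deferred to \cite{TeZi}, so nothing is missing.
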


Nondividing and nonforking are used to define ternary relations on small subsets of $\M$, given by
\begin{enumerate}
\item $A\ind^d_C B$ if and only if $\tp(A/BC)$ does not divide over $C$,
\item $A\ind^f_C B$ if and only if $\tp(A/BC)$ does not fork over $C$.
\end{enumerate}

These relations were originally defined to abstractly capture notions of independence and dimension in stable theories, and have been found to still be meaning ful in more complicated theories as well. In particular, we will consider the interpretation of these notions in the unstable theories of certain homogeneous graphs.

\section{Graphs}\label{graphs}

Recall that a countable graph $G$ is \textbf{universal} if any countable graph is isomorphic to an induced subgraph of $G$; and $G$ is \textbf{homogenous} if any graph isomorphism between finite subsets of $G$ extends to an automorphism of $G$. 

The canonical example of such a graph is the countable \textit{random graph}, i.e. the Fra\"{i}ss\'{e} limit of the class of finite graphs. In \cite{Henson}, a new family of countable homogenous graphs was introduced: the generic $K_n$-free graphs, for $n\geq 3$, which are often called the \textit{Henson graphs}. For a particular $n\geq 3$, there is a unique such graph up to isomorphism.

\begin{definition}
Fix $n\geq 3$ and let $K_n$ be the complete graph on $n$ vertices. The generic $K_n$-free graph, $\cH_n$, is the unique countable graph such that
\begin{enumerate}[$(i)$]
\item $\cH_n$ is $K_n$-free,
\item any finite $K_n$-free graph is isomorphic to an induced subgraph of $\cH_n$,
\item any graph isomorphism between finite subsets of $\cH_n$ extends to an automorphism of $\cH_n$.
\end{enumerate}
\end{definition}

Given $n\geq 3$, $\cH_n$ can also be constructed as the Fra\"{i}ss\'{e} limit of the clas of finite $K_n$-free graphs.

We study graph structures in the graph language $\cL=\{R\}$, where $R$ is interpreted as the binary edge relation. We consider
\begin{enumerate}
\item $T_0$, the complete theory of the random graph,
\item $T_n=\Th(\cH_n)$, for $n\geq 3$.
\end{enumerate}
It is a well-known fact (and a standard exercise) that each of these theories is $\aleph_0$-categorical with quantifier elimination.

Fix $n\geq 3$ and fix $\H_n\models T_n$, a sufficiently saturated ``monster" model of $T_n$. As $\H_n$ is a graph, we can embed it in a \textit{larger} sufficiently saturated ``monster" model $\G\models T_0$. Note that $\H_n$ is then a subgraph of $\G$, but not an elementary substructure. Let $\kappa(\H_n)=\sup\{\kappa:\textnormal{$\H_n$ is $\kappa$-saturated}\}$.

For the rest of the paper, $n$, $\H_n$, and $\G$ are fixed. By saturation, we have the following fact.

\begin{proposition}\label{gext}
Suppose $C\subset\H$ and $X\seq\G$, such that $X$ is $K_n$-free, $C\seq X$, and $|X|\leq\kappa(\H)$. Then there is a graph embedding $f:X\func\H$ such that $f|_C=\id_C$.
\end{proposition}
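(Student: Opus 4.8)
The plan is to build $f$ by transfinite recursion, placing the vertices of $X\setminus C$ into $\H$ one at a time and invoking saturation to realize the type demanded at each stage. First I would enumerate $X\setminus C=(x_\alpha)_{\alpha<\lambda}$ and construct an increasing chain of partial graph embeddings $f_\alpha$, where $f_\alpha$ is defined on $C\cup\{x_\beta:\beta<\alpha\}$, fixes $C$, satisfies $f_0=\id_C$, and is obtained at limit stages by taking unions. Note that $f_0=\id_C$ really is a graph embedding into $\H$: since $\H$ is a subgraph of $\G$ and $C\seq X\seq\G$, the edges within $C$ agree in $\H$ and in $X$. All the content is in the successor step.

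At stage $\alpha$, write $B_\alpha=f_\alpha(C\cup\{x_\beta:\beta<\alpha\})\seq\H$ and consider the partial type $p_\alpha(v)$ over $B_\alpha$ which, for each $y\in C\cup\{x_\beta:\beta<\alpha\}$, contains $R(v,f_\alpha(y))$ if $R(x_\alpha,y)$ holds in $\G$ and $\neg R(v,f_\alpha(y))$ otherwise. If $p_\alpha$ is realized by some $v_\alpha\in\H$, then setting $f(x_\alpha)=v_\alpha$ extends $f_\alpha$ to a graph embedding; so it suffices to see that $p_\alpha$ is consistent and realized in $\H$.

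Consistency I would check through finite satisfiability. Any finite subset of $p_\alpha$ mentions parameters $f_\alpha(y_1),\dots,f_\alpha(y_k)\in B_\alpha$ and prescribes the edges from a new vertex $v$ to them. The finite graph on $\{f_\alpha(y_1),\dots,f_\alpha(y_k),v\}$ carrying these edges is isomorphic, via $f_\alpha^{-1}$ on the parameters together with $v\mapsto x_\alpha$, to the induced subgraph of $X$ on $\{y_1,\dots,y_k,x_\alpha\}$; as $X$ is $K_n$-free, this finite graph is $K_n$-free. The genericity of $\cH_n$, expressed by the one-point extension axioms that hold in every model of $T_n$ (in particular in $\H$), then yields a vertex of $\H$ with exactly these edges to $f_\alpha(y_1),\dots,f_\alpha(y_k)$. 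So every finite subset of $p_\alpha$ is satisfied in $\H$, whence $p_\alpha$ is consistent. Since $C\subset\H$ gives $|C|<\kappa(\H)$ and $\alpha<\lambda\leq|X|\leq\kappa(\H)$ gives $|\alpha|<\kappa(\H)$, we have $|B_\alpha|<\kappa(\H)$, and saturation of $\H$ realizes $p_\alpha$.

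The step I expect to be the crux is the consistency of $p_\alpha$: the sole way to obstruct a one-point extension of a $K_n$-free graph is to complete a copy of $K_n$, and the hypothesis that $X$ is $K_n$-free is precisely what rules this out. The only remaining point is the routine bookkeeping ensuring $B_\alpha$ stays of size $<\kappa(\H)$ throughout, which follows from $|X|\leq\kappa(\H)$ together with the regularity of $\kappa(\H)$.
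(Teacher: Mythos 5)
Your proposal is correct and is exactly the saturation/one-point-extension argument the paper invokes without proof (``By saturation, we have the following fact''), so the approach matches the paper's intent. One small repair: as written, $p_\alpha(v)$ contains only edge and non-edge conditions, so a realization could coincide with some $f_\alpha(y)$ having the same prescribed neighborhood (e.g.\ two non-adjacent vertices of $X\setminus C$ with identical edges to everything placed earlier), which would break injectivity; you should add $v\neq b$ for each $b\in B_\alpha$ to $p_\alpha$, which costs nothing since the extension axioms already supply a vertex distinct from all named parameters.
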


The remainder of this section is devoted to specifying notation and conventions concerning the language $\cL$. First, we consider types. 

\begin{definition} Suppose $C\subset\G$, with $|C|<\kappa(\H_n)$.
\begin{enumerate}
\item We only consider partial types $\pi(\xbar)$ such that $|\xbar|\leq\kappa(\H_n)$. Furthermore, we will assume types are ``symmetricially closed". For example if $c\in C$ then $x\pR c\in\pi(\xbar)$ if and only if $c\pR x\in\pi(\xbar)$.
\item An \textbf{$R$-type over $C$} is a collection $\pi(\xbar)$ of atomic and negated atomic $\cL$-formulas, none of which is of the form $x_i=c$, where $c\in C$. When we say that $\pi(\xbar,\ybar)$ is an $R$-type over $C$, we will assume further that $\pi(\xbar,\ybar)$ does not contain $x_i=x_j$ or $y_i=y_j$, for some $i\neq j$.
\item Suppose $\pi(\xbar)$ is an $R$-type over $C$. An \textbf{optimal solution} of $\pi(\xbar)$ is a tuple $\abar\models\pi(\xbar)$ such that 
\begin{enumerate}[$(i)$]
\item $a_i\neq a_j$ for all $i\neq j$ and $a_i\not\in C$ for all $i$,
\item $a_i\pR a_j$ if and only if $x_i\pR x_j\in\pi(\xbar)$,
\item given $c\in C$, $a_i\pR c$ if and only if $x_i\pR c\in\pi(\xbar)$.
\end{enumerate}
\end{enumerate}
\end{definition}

We will frequently use the following fact, which says that we can always find optimal solutions of $R$-types.

\begin{proposition}
Suppose $C\subset\H_n$ and $\pi(\xbar)$ is an $R$-type over $C$.
\begin{enumerate}[$(a)$]
\item $\pi(\xbar)$ is consistent with $T_0$ if and only if it has an optimal solution in $\G$.
\item $\pi(\xbar)$ is consistent with $T_n$ if and only if it has an optimal solution in $\H_n$.
\end{enumerate}
\end{proposition}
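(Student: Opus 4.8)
The plan is to handle both parts with one construction, differing only in the embedding theorem invoked at the end. In each part the backward implication is immediate: an optimal solution of $\pi(\xbar)$ is in particular a realization of $\pi(\xbar)$ lying in $\G$ (resp.\ $\H_n$), so $\pi(\xbar)$ is consistent with $T_0$ (resp.\ $T_n$). All the work is in the forward implication, and the idea is to replace $\pi$ by a single concrete graph whose $C$-fixing embeddings are exactly the optimal solutions.

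To that end I would attach to $\pi(\xbar)$ its \emph{optimal graph} $X$: its vertex set is $C$ together with one new vertex $v_i$ for each variable $x_i$ (all distinct and outside $C$, which is legitimate since an $R$-type carries no positive equality among its variables); the induced graph on $C$ is that of $\H_n$; and we declare $v_i \pR v_j$ exactly when $x_i \pR x_j \in \pi$, and $v_i \pR c$ exactly when $x_i \pR c \in \pi$. The design of $X$ makes the forward direction mechanical once $X$ is embedded: given a graph embedding $f \colon X \func \G$ (or $f \colon X \func \H_n$) with $f|_C = \id_C$, set $a_i = f(v_i)$. Injectivity of $f$ gives (i), and the ``exactly when'' clauses give (ii) and (iii); finally $\abar \models \pi$ follows because consistency of $\pi$ forbids it from containing both an atomic formula and its negation, so each negated atomic formula of $\pi$ is read off from the corresponding clause.

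For part (a) the required embedding exists for free: $X$ is a small graph extending $C \seq \G$, and universality and homogeneity of the random graph, together with the saturation of $\G$, produce a graph embedding $X \func \G$ fixing $C$. For part (b) I must instead apply Proposition~\ref{gext}, whose hypothesis forces me to prove that $X$ is $K_n$-free; this is the heart of the matter. Here I would use the standing assumption that $\pi$ is consistent with $T_n$: fix a realization $\bbar \models \pi$ inside some $N \models T_n$ with $C \seq N$, and map the vertices of $X$ into $N$ by $c \mapsto c$ for $c \in C$ and $v_i \mapsto b_i$. The key claim is that this map sends every edge of $X$ to an edge of $N$: an edge inside $C$ is preserved outright, while an edge $v_i \pR v_j$ or $v_i \pR c$ of $X$ corresponds to an atomic formula of $\pi$, which $\bbar$ satisfies. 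Since adjacency in $N$ rules out equality, the image of any $n$-clique of $X$ consists of $n$ distinct, pairwise adjacent vertices of $N$, i.e.\ a $K_n$ in $N$ --- impossible. Hence $X$ is $K_n$-free, and as $|X| \leq \kappa(\H_n)$, Proposition~\ref{gext} supplies the embedding and therefore the optimal solution.

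The one genuinely delicate step is this $K_n$-freeness of the optimal graph, and it is worth isolating why it works: passing to the edge-minimal graph associated with $\pi$ cannot manufacture a new forbidden clique, because every edge we retain is already witnessed by a genuine realization of $\pi$ in a $K_n$-free model. The remaining points --- that the $v_i$ may be taken pairwise distinct and disjoint from $C$, and that the cardinality bound needed for Proposition~\ref{gext} is met --- are routine bookkeeping governed by the $R$-type conventions and the constraint $|\xbar| \leq \kappa(\H_n)$.
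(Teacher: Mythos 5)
Your proof is correct and follows exactly the idea the paper sketches (the paper leaves this proposition as an exercise, noting only that passing to the edge-minimal solution is always possible and, for $T_n$, cannot conflict with $K_n$-freeness); your edge-preserving, not-necessarily-injective map from the optimal graph into a $K_n$-free model realizing $\pi$ is precisely the right way to make that one line rigorous. The only caveat is definitional: you assert that an $R$-type $\pi(\xbar)$ carries no positive equality $x_i=x_j$, which the paper literally stipulates only for types written as $\pi(\xbar,\ybar)$, but the proposition needs that convention in any case, so this is the intended reading.
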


This is a straightforward exercise, which we leave to the reader. The idea is that a type cannot prove than an edge exists in a graph, without explicitly saying so. Moreover, removing extra edges to ``optimize" the solution of a consistent type is always possible and, in the case of $T_n$, will not conflict with the requirement that the solution be $K_n$-free.

Next, we specify notation and conventions concerning $\cL$-formulas.

\begin{definition} Suppose $C\subset\G$.
\begin{enumerate}
\item Let $\cL_0(C)$ be the collection of conjunctions of atomic and negated atomic $\cL$-formulas, with parameters from $C$, such that no conjunct is of the form $x=c$, where $x$ is a variable and $c\in C$. When we write $\vphi(\xbar,\ybar)\in\cL_0(C)$, we will assume further that no conjunct of $\vphi(\xbar,\ybar)$ is of the form $x_i=x_j$ or $y_i=y_j$, for some $i\neq j$.
\item Given $\vphi(\xbar)\in\cL_0(C)$ and $\theta(\xbar)$, an atomic or negated atomic formula, we write ``$\vphi(\xbar)\rhd\theta(\xbar)$" if $\theta(\xbar)$ is a conjunct of $\vphi(\xbar)$.
\item We will assume $\cL_0(C)$-formulas are ``symmetrically closed". For example $\vphi(\xbar)\rhd x\pR c$ if and only if $\vphi(\xbar)\rhd c\pR x$.
\item Let $\cL_R(C)$ be the collection of formulas $\vphi(\xbar,\ybar)\in\cL_0(C)$ such that no conjunct is of the form $x_i=y_j$.
\end{enumerate}
\end{definition}

The main result of this paper will be a characterization of forking and dividing in $T_n$. We will use the following characterization of dividing in $T_0$, which is a  standard exercise (see e.g. \cite{TeZi}).

\begin{fact}\label{T0 forking}
Fix $C\subset\G$ and $\vphi(\xbar,\ybar)\in\cL_0(C)$. Suppose $\bbar\in\G\backslash C$ is such that $\vphi(\xbar,\bbar)$ is consistent. Then $\vphi(\xbar,\bbar)$ divides over $C$ if and only if $\vphi(\xbar,\bbar)\rhd x_i=b$ for some $b\in\bbar$. Consequently, if $A,B,C\subset\G$ then $A\ind^d_C B\Leftrightarrow A\cap B\seq C$. 
\end{fact}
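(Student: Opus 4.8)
The plan is to prove the two directions of the equivalence separately and then deduce the statement about $\ind^d_C$, using throughout that $T_0$ eliminates quantifiers and has trivial algebraic closure, so $\acl(C)=C$ for every $C\seq\G$. For the easy direction, I would assume $\vphi(\xbar,\bbar)\rhd x_i=b$ for some $b\in\bbar$. Since $\bbar\in\G\backslash C$, the coordinate $b$ lies outside $C=\acl(C)$, so $\tp(\bbar/C)$ has infinitely many realizations with pairwise distinct $b$-coordinate. Extracting a $C$-indiscernible sequence from such realizations (by Ramsey and compactness) and then applying a $C$-automorphism to move $\bbar$ into the first position, I obtain a $C$-indiscernible $(\bbar^l)_{l<\omega}$ with $\bbar^0=\bbar$ whose $b$-coordinate $b^l$ takes pairwise distinct values. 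Each $\vphi(\xbar,\bbar^l)$ implies $x_i=b^l$, so $\bigcup_{l<\omega}\vphi(\xbar,\bbar^l)$ implies $b^0=b^1=\cdots$, which is absurd; hence $\vphi(\xbar,\bbar)$ divides over $C$.

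For the converse I would prove the contrapositive: if no conjunct of $\vphi(\xbar,\bbar)$ has the form $x_i=b$ with $b\in\bbar$, then $\vphi(\xbar,\bbar)$ does not divide. Fixing an arbitrary $C$-indiscernible $(\bbar^l)_{l<\omega}$ with $\bbar^0=\bbar$, I must show $\Pi(\xbar):=\bigcup_{l<\omega}\vphi(\xbar,\bbar^l)$ is consistent. Because $\vphi\in\cL_0(C)$ has no conjunct $x_i=c$, no conjunct $x_i=x_j$, and (by hypothesis) no conjunct $x_i=b_j$, the type $\Pi$ contains no equality conjunct involving a variable, so $\Pi$ is an $R$-type over $C$. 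By the Proposition characterizing consistency of $R$-types via optimal solutions, it then suffices to verify that $\Pi$ is not self-contradictory, i.e.\ that no variable $x_i$ and vertex $v$ have both $x_i\pR v$ and $\neg(x_i\pR v)$ required by $\Pi$.

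The hard part will be the combinatorial lemma controlling which vertices are shared among the $\bbar^l$, and I expect this to be the main obstacle. A clash can arise only if a single vertex $v$ occurs as $b^l_j$ and as $b^m_k$ with $\vphi\rhd x_i\pR y_j$ and $\vphi\rhd\neg(x_i\pR y_k)$. By indiscernibility each coordinate is either \emph{constant} ($b^l_j$ independent of $l$) or \emph{moving} ($b^l_j$ pairwise distinct in $l$). Assuming, as I may, that the entries of $\bbar$ are pairwise distinct and (by hypothesis) outside $C$, I would rule out cross-coordinate coincidences $b^l_j=b^m_k$ with $j\neq k$ and $l\neq m$: shifting and comparing the pairs $(\bbar^0,\bbar^m)$ for $m\geq 1$ forces coordinate $k$ to be constant equal to $b^0_j$, whence $b^0_j=b^0_k$, contradicting distinctness of the entries of $\bbar$. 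Hence vertices shared across distinct positions come only from constant coordinates, at which $\vphi$ imposes one and the same edge requirement for every $l$, while each moving vertex $b^l_j$ is touched by exactly one conjunct of $\Pi$. So no clash occurs, $\Pi$ is consistent by the optimal-solution criterion, and $\vphi(\xbar,\bbar)$ does not divide.

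Finally, for the consequence I would reduce to formulas using Proposition \ref{facts}(a): by quantifier elimination, $\tp(A/BC)$ divides over $C$ if and only if it contains a dividing formula, and by the equivalence just proved such a formula exists exactly when $\tp(A/BC)$ forces $x_i=e$ for some $e\in B\backslash C$, that is, when some element of $A$ equals an element of $B\backslash C$. Thus $\tp(A/BC)$ divides over $C$ if and only if $A\cap B\not\seq C$; equivalently $A\ind^d_C B\Leftrightarrow A\cap B\seq C$, as required.
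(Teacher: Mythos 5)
Your argument is correct. The paper does not prove this Fact at all --- it is cited as ``a standard exercise (see e.g.\ \cite{TeZi})'' --- so there is no proof to compare against; what you give is the standard argument (algebraicity of $x_i=b$ with $b\notin\acl(C)=C$ for the dividing direction, and for the converse a syntactic clash analysis showing that coincidences among the $b^l_j$ across distinct coordinates are impossible for an indiscernible sequence with pairwise distinct $\bbar$-entries), and the one liberty you take --- reading the optimal-solution proposition as the statement that a formally non-contradictory $R$-type is consistent with $T_0$ --- is exactly the intended content of that proposition.
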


$T_0$ is a standard example of a \textit{simple theory}, and so the previous fact is also a characterization of forking. On the other hand, $T_n$ is non-simple. Indeed, the Henson graph is a canonical example where $\ind^f$ fails \textit{amalgamation over models} (see \cite{KiPi}). A direct proof of this (for $n=3$) can be found in \cite[Example 2.11(4)]{Hartstab}. The precise classification of $T_n$ is well-known, and summarized by the following result.

\begin{fact} 
$T_n$ is $\TP_2$, $\SOP_3$, and $\NSOP_4$.
\end{fact}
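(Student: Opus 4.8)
The three assertions are logically independent, so I would establish them separately, using quantifier elimination throughout to reduce every formula to a quantifier-free one and testing consistency of the resulting $R$-types by the optimal-solution criterion: such a type is consistent with $T_n$ precisely when it has a $K_n$-free optimal solution. For the two ``positive'' properties, $\TP_2$ and $\SOP_3$, the plan is to exhibit an explicit quantifier-free formula together with a generic configuration of vertices; for $\NSOP_4$, which quantifies over \emph{all} formulas, I would instead exploit the free amalgamation property of the class of finite $K_n$-free graphs (given $K_n$-free graphs $B_1,B_2$ with common part $A$, their amalgam over $A$ with no edges across is again $K_n$-free, since any clique, having no cross-edges available, lies entirely in $B_1$ or in $B_2$).

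For $\TP_2$ I would use $\vphi(x;y_1,\dots,y_{n-1}):=\bigwedge_{k=1}^{n-1}R(x,y_k)$ and build an array $(\abar_{i,j})_{i,j<\omega}$ of $(n-1)$-tuples $\abar_{i,j}=(b^1_{i,j},\dots,b^{n-1}_{i,j})$ as follows: place no edges between distinct rows, and inside row $i$ declare $b^k_{i,j}$ adjacent to $b^{k'}_{i,j'}$ exactly when $k\neq k'$ and $j\neq j'$. Each row is then complete $(n-1)$-partite with the diagonal matchings deleted, so it is $K_n$-free and each $\abar_{i,j}$ is an independent set; hence each single instance $\vphi(x,\abar_{i,j})$ is consistent. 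Any $n-1$ columns of a fixed row contain a rainbow transversal spanning $n-1$ distinct columns, hence a copy of $K_{n-1}$, so a common neighbour would complete a $K_n$; this makes each row $(n-1)$-inconsistent. Along any path the chosen vertices span an edgeless graph, so a common neighbour exists and the path is consistent. Genericity (Proposition~\ref{gext}) produces the configuration, and a standard Ramsey extraction makes the rows mutually indiscernible.

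For $\SOP_3$ I would fix a clique $B=\{d_1,\dots,d_{n-3}\}$ of size $n-3$ as parameters, let each $\abar_i=(a^1_i,a^2_i,a^3_i)$ have all three coordinates adjacent to every vertex of $B$, and set
\[
\vphi(\xbar,\ybar):=R(x_1,y_2)\wedge R(x_2,y_3)\wedge R(x_3,y_1)\wedge\bigwedge_{k=1}^{3}\bigwedge_{l=1}^{n-3}R(x_k,d_l).
\]
A directed $3$-cycle $\vphi(\bbar_1,\bbar_2)\wedge\vphi(\bbar_2,\bbar_3)\wedge\vphi(\bbar_3,\bbar_1)$ forces $b^1_1,b^2_2,b^3_3$ to be pairwise adjacent and each adjacent to all of $B$, producing a $K_n$; so $T_n$ proves there is no $3$-cycle. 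On the other hand, decreeing $R(a^1_i,a^2_j)$, $R(a^2_i,a^3_j)$, $R(a^3_i,a^1_j)$ to hold exactly for $i<j$ (and keeping all coordinates adjacent to $B$) yields a $K_n$-free graph, since any triangle among the $a$'s would require a cyclic chain $i<j<k<i$ of indices; this gives an infinite $\vphi$-chain, so $\vphi$ witnesses $\SOP_3$.

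The main obstacle is $\NSOP_4$, since it must be proved for every formula simultaneously. Suppose $\vphi(\xbar,\ybar)\in\cL_0(C)$ admits an infinite $\vphi$-chain $(\abar_i)_{i<\omega}$ and put $p(\xbar,\ybar):=\tp(\abar_0\abar_1/C)$, so $\vphi\in p$. I would realize the directed $4$-cycle type $\bigwedge_{t\in\mathbb{Z}/4}\vphi(\bar z_t,\bar z_{t+1})$ by taking tuples $\bar z_0,\bar z_1,\bar z_2,\bar z_3$ in which each consecutive pair $(\bar z_t,\bar z_{t+1})$ realizes $p$ and which are otherwise freely amalgamated over $C$, so that no edges join the opposite pairs $\bar z_0,\bar z_2$ or $\bar z_1,\bar z_3$. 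The key point is that the amalgam is $K_n$-free: any clique meets at most one of $\bar z_0,\bar z_2$ and at most one of $\bar z_1,\bar z_3$ off of $C$, hence lies in $C\cup\bar z_s\cup\bar z_{s'}$ with $s$ even and $s'$ odd; but in the $4$-cycle every even–odd pair is consecutive, so this set is a copy of $C\cup\abar_0\abar_1$ and is $K_n$-free. Genericity then realizes the $4$-cycle type in $\H_n$, contradicting the assumption that $\vphi$ has no $4$-cycle; as $\vphi$ was arbitrary, $T_n$ is $\NSOP_4$. The delicate part is precisely this free-amalgamation bookkeeping—checking that gluing four copies of $p$ cyclically over the parameter set $C$ creates no new $K_n$.
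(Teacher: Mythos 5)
The paper does not actually prove this Fact: it simply cites the literature ($\TP_2$ from Chernikov's paper for $n=3$, $\SOP_3$ and $\NSOP_4$ from Shelah's paper for $n=3$, and $\NSOP_4$ for all $n$ from Patel's free-amalgamation result), remarking that the generalizations to $n\geq 3$ are routine. Your proposal supplies a self-contained proof, and the three constructions you give are essentially the natural $n$-ary generalizations of the arguments in those references: the complete-multipartite-minus-matchings array for $\TP_2$, a ``cyclically oriented'' tripartite formula padded by a clique of size $n-3$ for $\SOP_3$, and the free-amalgamation-of-a-4-cycle argument for $\NSOP_4$. All three check out: the rows of your array are $(n-1)$-inconsistent via the rainbow $K_{n-1}$ and the paths are edgeless hence consistent; a $\vphi$-3-cycle forces the genuine triangle $b^1_1,b^2_2,b^3_3$ over the clique $B$ while the index-ordered chain is triangle-free because a triangle would need $i<j<k<i$; and in the 4-cycle amalgam every clique avoids at least one of each opposite pair, hence lands in $C\cup\bar z_s\cup\bar z_{s'}$ with $(s,s')$ consecutive, which is a copy of the $K_n$-free graph $C\abar_0\abar_1$. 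What your route buys is a proof on the page rather than a pointer; what the citations buy is avoiding exactly the bookkeeping you flag.

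Two points should be shored up. First, in the $\NSOP_4$ argument you must extract an indiscernible (or at least $2$-type-homogeneous) chain before setting $p=\tp(\abar_0\abar_1/C)$: without $\tp(\abar_0/C)=\tp(\abar_1/C)$ the cyclic gluing of four copies of $p$ is not even well defined, since each $\bar z_t$ must simultaneously play the $\xbar$-role and the $\ybar$-role; the same extraction (plus quantifier elimination) is what lets you reduce an arbitrary formula to a single complete type $p$, so the restriction to $\cL_0(C)$ should be justified rather than assumed. Relatedly, if $p$ forces identifications between coordinates of $\abar_0$ and $\abar_1$, the opposite pairs $\bar z_0,\bar z_2$ and $\bar z_1,\bar z_3$ may overlap outside $C$, and the ``no cross-edges'' clause and the clique analysis must be carried out over that enlarged overlap. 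Second, your $\SOP_3$ witness admits consistent $2$-cycles (a $2$-cycle only forces a bipartite graph between the two triples); this is harmless under Shelah's definition, which forbids only cycles of length exactly $3$, but if one works with the variant forbidding all cycles of length at most $3$ the formula would need to be modified. Neither issue is fatal, but both deserve explicit treatment.
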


See \cite{ChNTP2} and \cite{Sh500} for definitions of these properties. The proof of $\TP_2$ can be found in \cite{ChNTP2} for $n=3$. $\SOP_3$ and $\NSOP_4$ are demonstrated in \cite{Sh500} for $n=3$. The generalizations of these arguments to $n\geq 3$ are fairly straightforward. However, $\NSOP_4$ for all $n\geq 3$ also follows from a more general result in \cite{PaSOP4}.

\section{Dividing in $T_n$} \label{divsec}

The goal of this section is to find a graph theoretic characterization of dividing independence in $T_n$. Therefore, when we say that a partial type divides over $C\subset\H_n$, we mean with respect to the theory $T_n$.

\begin{comment}
We write $B\pR C$ to mean $b\pR c$ for all $b\in B$ and $c\in C$. On the other hand, we use $B\nR C$ to mean $\neg b\pR c$ for all $b\in B$ and $c\in C$. In particular, note that $B\nR C$ and $\neg(B\pR C)$ are not necessarily the same.
\end{comment}

\begin{lemma}\label{no vertical} 
Suppose $(\bbar^l)_{l<\omega}$ is an indiscernible sequence in $\H_n$.
\begin{enumerate}[$(a)$]
\item $\H_n\models\neg b^k_i\pR b^l_i$ for all $k<l<\omega$ and $1\leq i\leq |\bbar^0|$.
\item If $l(\bbar^0)<n-1$ then $\bigcup_{l<\omega}\bbar^l$ is $K_{n-1}$ free.
\end{enumerate}
\end{lemma}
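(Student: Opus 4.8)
The plan is to prove both parts by contradiction, in each case producing a forbidden complete subgraph and appealing to the $K_n$-freeness of $\H_n$. The only input beyond indiscernibility is the fact that the edge relation $\pR$ is irreflexive, so that adjacency of two vertices already forces them to be distinct.

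For part $(a)$, I would suppose toward a contradiction that $b^k_i\pR b^l_i$ for some $k<l$ and some fixed coordinate $i$. Since the sequence is indiscernible, the $2$-type of $(b^{k'}_i,b^{l'}_i)$ is the same for every pair $k'<l'$, so in fact $b^{k'}_i\pR b^{l'}_i$ holds for all $k'<l'$. Now consider the $n$ vertices $b^0_i,b^1_i,\dots,b^{n-1}_i$. Any two of them are adjacent, and since adjacency forces distinctness they are pairwise distinct; hence they span a copy of $K_n$ inside $\H_n$, contradicting the $K_n$-freeness of $\H_n$. This establishes $(a)$.

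For part $(b)$, write $m=l(\bbar^0)<n-1$ and suppose toward a contradiction that $\bigcup_{l<\omega}\bbar^l$ contains a copy of $K_{n-1}$, with vertex set $\{a_1,\dots,a_{n-1}\}$ consisting of distinct, pairwise adjacent vertices. For each $k$ I choose one representation $a_k=b^{l_k}_{i_k}$. The key claim is that the chosen coordinates $i_1,\dots,i_{n-1}$ are pairwise distinct: if $i_k=i_{k'}$ with $k\neq k'$, then, since $a_k\neq a_{k'}$, we must have $l_k\neq l_{k'}$, say $l_k<l_{k'}$, and part $(a)$ gives $\neg\,b^{l_k}_{i_k}\pR b^{l_{k'}}_{i_{k'}}$, i.e.\ $a_k$ and $a_{k'}$ are nonadjacent, contradicting that they lie in a clique. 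Thus $i_1,\dots,i_{n-1}$ are $n-1$ distinct elements of $\{1,\dots,m\}$, which is impossible since $m<n-1$.

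The argument is short, and the point I would be most careful about is that the same vertex of $\H_n$ may occur several times along the sequence, e.g.\ as $b^l_i$ for several pairs $(l,i)$, so there is no canonical way to attach a coordinate to a clique vertex. Fixing \emph{one} representation $a_k=b^{l_k}_{i_k}$ per vertex and then showing that these representatives must use distinct coordinate positions is exactly what sidesteps this issue, turning $(b)$ into a clean pigeonhole count against the length bound $l(\bbar^0)<n-1$. Everything else reduces to the observation used in $(a)$, namely that adjacency within an indiscernible sequence is determined by the order type of the indices, so a single same-coordinate edge propagates to all pairs and manufactures a $K_n$.
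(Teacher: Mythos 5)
Your proposal is correct and follows essentially the same route as the paper: for $(a)$, indiscernibility propagates a single same-coordinate edge to all pairs, producing a $K_n$; for $(b)$, a pigeonhole count on the coordinate positions forces two clique vertices into the same coordinate, which part $(a)$ forbids. The paper phrases $(b)$ by partitioning an arbitrary $(n-1)$-set according to coordinate and finding a class of size $\geq 2$, while you assume a clique and show the chosen coordinates must be distinct — the same argument read in the contrapositive.
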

\begin{proof}
Part $(a)$. Suppose not. By indiscernibility, $\H_n\models b^k_i\pR b^l_i$ for all $k<l<\omega$. In particular, $\{b^1_i,\ldots,b^n_i\}\cong K_n$, which is a contradiction.

Part $(b)$. Suppose $S\seq B:=\bigcup_{l<\omega}\bbar^l$ with $|S|=n-1$ and, for $1\leq i\leq |\bbar^0|$, let $S_i=S\cap\{b^l_i:l<\omega\}$. Since $l(\bbar^0)<n-1$, there is some $i$ such that $|S_i|\geq 2$. By part (a), $S\not\cong K_{n-1}$.
\end{proof}

We first define a graph theoretic binary relation on disjoint graphs, which will capture the notion of dividing in $T_n$.

\begin{definition}\label{bound def}$~$
\begin{enumerate}
\item Suppose $B,C\subset\H_n$ are disjoint. Then $B$ is \textbf{$n$-bound to $C$}, written $K_n(B/C)$, if there is $B_0\seq BC$ such that
\begin{enumerate}[$(i)$]
\item $|B_0|=n$ and $B_0\cap C\neq\emptyset\neq B_0\cap B$,
\item if $u,v\in B_0\cap C$ are distinct then $u\pR v$,
\item if $u\in B_0\cap B$ and $v\in B_0\cap C$ then $u\pR v$.
\end{enumerate}
We say $B_0$ \textbf{witnesses} $K_n(B/C)$. Informally, $B_0$ witnesses $K_n(B/C)$ if and only if the only thing preventing $B_0\cong K_n$ is a possible lack of edges between vertices in $B$.
\item Suppose $\vphi(\xbar,\ybar)\in\cL_R(C)$ and $\bbar\in\H_n\backslash C$ such that $\vphi(\xbar,\bbar)$ is consistent. Then $\bbar$ is \textbf{$\vphi$-$n$-bound to $C$}, written $K^\vphi_n(\bbar/C)$, if there is $B\seq\bbar$, with $0<|B|<n$, such that
\begin{enumerate}[$(i)$]
\item $\neg K_n(B/C)$,
\item $K_n(B/\abar C)$ for all $\abar\models\vphi(\xbar,\bbar)$.
\end{enumerate} 
We say $B$ \textbf{witnesses} $K^\vphi_n(\bbar/C)$.
\end{enumerate}
\end{definition}

The main result of this section (Theorem \ref{Tn dividing}) will show that $K^\vphi_n$ is the graph theoretic interpretation of dividing. In particular, for $\vphi(\xbar,\ybar)\in\cL_R(C)$ and $\bbar\in\H_n\backslash C$ with $\vphi(\xbar,\bbar)$ consistent, we will show that $\vphi(\xbar,\bbar)$ divides over $C$ if and only if $K_n^\vphi(\bbar/C)$. The reverse direction of the proof of this will use the following construction of a particular indiscernible sequence. 

\begin{construction}\label{Gamma def}
Fix $C\subset\H_n$ and $\bbar\in\H_n\backslash C$. We extend $\bbar$ to an infinite $C$-indiscernible sequence $(\bbar^l)_{l<\omega}$, such that $b^l_i\neq b^m_j$ and $\neg b^l_i\pR b^m_j$ for all $l<m<\omega$ and $1\leq i,j\leq|\bbar|$. Note that $(\bbar^l)_{l<\omega}$ is $K_n$-free and so $(\bbar^l)_{l<\omega}$ is an indiscernible sequence in $\H_n$.

Given $B\seq\bbar$, we let $\Gamma(C\bbar,B)$ be the graph expansion of $C\cup(\bbar^l)_{l<\omega}$ obtained by adding $b^l_i\pR b^m_j$ if and only if $l<m$, $i<j$, and $b_i,b_j\in B$. We can embed $\Gamma(C\bbar,B)$ into $\G$ over $C$. Furthermore, if $\Gamma(C\bbar,B)$ is $K_n$-free, then we can embed $\Gamma(C\bbar,B)$ in $\H_n$ over $C$. In this case, if $\Gamma_0(C\bbar,B)$ is the image of $(\bbar^l)_{l<\omega}$, then $\Gamma_0(C\bbar,B)$ is a $C$-indiscernible sequence in $\H_n$. 
\end{construction}

\begin{lemma}\label{Gamma}
Let $C\subset\H_n$ and $\bbar\in\H_n\backslash C$. Suppose $\vphi(\xbar,\ybar)\in\cL_R(C)$ such that $\vphi(\xbar,\bbar)$ is consistent and $K_n^\vphi(\bbar/C)$, witnessed by $B\seq\bbar$. 
\begin{enumerate}[$(a)$]
\item $\Gamma(C\bbar,B)$ is $K_n$-free.
\item If $\Gamma_0(C\bbar,B)=(\bbar^l)_{l<\omega}$ then $\{\vphi(\xbar,\bbar^l):l<\omega\}$ is $(n-1)$-inconsistent with $T_n$.
\end{enumerate}
\end{lemma}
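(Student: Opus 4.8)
The plan is to prove the two parts separately. For part $(a)$ I would argue by contradiction: suppose $S\seq\Gamma(C\bbar,B)$ is a copy of $K_n$. First note that $S$ meets each column $\{b^l_i:l<\omega\}$ in at most one vertex, since the edges added in Construction~\ref{Gamma def} require $i<j$ strictly, and Lemma~\ref{no vertical}$(a)$ rules out the unaltered vertical edges; thus the graph-vertices of $S$ occupy distinct columns. I would then split on how many copies they occupy. If they all lie in a single copy $\bbar^l$, then since $\bbar^l C\cong\bbar C$ over $C$ and $\Gamma$ adds no within-copy edges, $S$ projects isomorphically to a copy of $K_n$ inside $\H_n$, contradicting $K_n$-freeness. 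If they occupy at least two copies, the crucial observation is that a graph-vertex whose column lies outside $B$ can never be joined to a vertex in another copy (the added edges require both columns in $B$); hence every column used by $S$ lies in $B$. Projecting $S$ to a single copy now gives $S'\seq BC$ with $|S'|=n$, all $C$--$C$ edges, and all $B$--$C$ edges present. Since $|B|<n$ we cannot have $S'\cap C=\emptyset$, and if $S'\cap C\neq\emptyset$ then $S'$ witnesses $K_n(B/C)$, contradicting clause $(i)$ in the definition of $K^\vphi_n(\bbar/C)$.

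For part $(b)$ I would fix $l_1<\dots<l_{n-1}$ and show $\bigwedge_{s}\vphi(\xbar,\bbar^{l_s})$ is inconsistent with $T_n$ (this uses part $(a)$, so that $\Gamma_0(C\bbar,B)$ exists in $\H_n$). Since this conjunction is essentially an $R$-type over $C\cup\bigcup_s\bbar^{l_s}$, it suffices to rule out an optimal solution $\abar$. The key preparatory step is to fix a uniform shape of witness. Applying clause $(ii)$ of $K^\vphi_n(\bbar/C)$ to an optimal solution $\abar^*$ of $\vphi(\xbar,\bbar)$ yields a witness $B_0=A\sqcup P\sqcup Q$ of $K_n(B/\abar^* C)$ with $A\seq\abar^*$, $P\seq B$, $Q\seq C$. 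Clause $(i)$ (that is, $\neg K_n(B/C)$) forces $A\neq\emptyset$, for otherwise $B_0\seq BC$ would already witness $K_n(B/C)$. Here is where optimality does the work: because $\abar^*$ is optimal, every edge from $A$ to the rest of $B_0$ must be an actual conjunct of $\vphi$ -- an $A$--$A$ edge gives $\vphi\rhd x_k\pR x_{k'}$, an $A$--$P$ edge gives $\vphi\rhd x_k\pR y_j$, and an $A$--$Q$ edge gives $\vphi\rhd x_k\pR c$. These conjuncts persist in every solution and every copy, while the $Q$--$Q$ and $P$--$Q$ edges are honest edges of $\H_n$ that transport across copies by $C$-indiscernibility.

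Finally I would assemble a forbidden clique. Writing $P=\{b_{j_1},\dots,b_{j_p}\}$ with $j_1<\dots<j_p$, the bound $A\neq\emptyset$ gives $p\leq n-1$, so there are enough copies to place $b_{j_u}$ into copy $l_u$. Set $\tilde B_0=A\cup Q\cup\{b^{l_1}_{j_1},\dots,b^{l_p}_{j_p}\}$; because $\abar$ is optimal it is disjoint from $C$ and from the $\bbar^l$, so $|\tilde B_0|=n$. I would then verify $\tilde B_0$ is a clique: the edges incident to $A$ hold since $\abar$ satisfies each $\vphi(\xbar,\bbar^{l_u})$ and the relevant atoms are conjuncts of $\vphi$; the $Q$-edges and $P$--$Q$ edges hold in $\H_n$; and for $u<v$ the pair $b^{l_u}_{j_u},b^{l_v}_{j_v}$ is joined precisely because $l_u<l_v$, $j_u<j_v$ and $b_{j_u},b_{j_v}\in B$, which is exactly the edge prescribed by Construction~\ref{Gamma def}. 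Thus $\tilde B_0\cong K_n$ in $\H_n$, a contradiction. I expect the heart of the argument -- and its main obstacle -- to be exactly this last alignment: placing the vertices of $P$ into increasing copies so that the column order $j_1<\dots<j_p$ matches the ``$l<m$ and $i<j$'' pattern hard-wired into $\Gamma$, together with the bookkeeping (the uniform witness shape extracted via optimality, and the distinctness of $\tilde B_0$) needed to make the count come out to exactly $n$.
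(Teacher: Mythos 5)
Your proof is correct and takes essentially the same route as the paper's: part $(a)$ projects a putative $K_n$ down to a witness of $K_n(B/C)$ after showing all its columns must lie in $B$, and part $(b)$ spreads the $B$-part of a witness of $K_n(B/C\abar)$ across increasing copies so that the edges added in Construction~\ref{Gamma def} complete a forbidden clique. The only cosmetic difference is that you extract the witness shape from a separate optimal solution $\abar^*$ of $\vphi(\xbar,\bbar)$ and transport it to $\abar$ via conjuncts of $\vphi$, whereas the paper applies clause $(ii)$ of $K_n^\vphi(\bbar/C)$ directly to the optimal solution of the $(n-1)$-fold conjunction.
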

\begin{proof}
We may consider $\Gamma_0(C\bbar,B)$ as an indiscernible sequence in $\G$. 

Part $(a)$. Suppose $K_n\cong W\seq\Gamma(C\bbar,B)$. Since $C$ is $K_n$-free, $W\cap\Gamma_0(C\bbar,B)\neq\emptyset$. Say $W\cap\Gamma_0(C\bbar,B)=\{b^{l_1}_{i_1},\ldots,b^{l_r}_{i_r}\}$ with $l_1\leq\ldots\leq l_r$.  Note that $i_s\neq i_t$ for all $1\leq s<t\leq r$ by \Cref{no vertical}. Let $B_0=\{b_{i_1},\ldots,b_{i_r}\}$. Define
$$
V=(W\backslash\{b^{l_1}_{i_1},\ldots,b^{l_r}_{i_r}\})\cup\{b_{i_1},\ldots,b_{i_r}\}.
$$
\indent If $l_1=l_r$ then, since $\bbar^{l_1}\equiv_C\bbar$, it follows that $V\cong K_n$, which is a contradiction. Therefore $l_1<l_r$. By construction of $\Gamma(C\bbar,B)$ it follows that $b_{i_1},b_{i_r}\in B$. If $1\leq s\leq r$ then we either have $l_1<l_s$ or $l_s<l_r$, and in either case it follows that $b_{i_s}\in B$. Therefore $r\leq |B|\leq n-1$; in particular $C\cap W\neq \emptyset$. But then $V$ witnesses that $B$ is $n$-bound to $C$, which is a contradiction. Therefore $\Gamma(C\bbar,B)$ is $K_n$-free.

Part $(b)$. By part $(a)$, we may consider $\Gamma_0(C\bbar,B)$ as in indiscernible sequence in $\H_n$. By indiscernibility, it suffices to show that the $R$-type $\pi(\xbar)=\{\vphi(\xbar,\bbar^l):0\leq l<n-1\}$ is inconsistent with $T_n$. So suppose, towards a contradiction, that $\pi(\xbar)$ is consistent with $T_n$ and let $\abar\in\H_n$ be an optimal solution. Then $\abar\models\vphi(\xbar,\bbar)$ so, by assumption, there is $D\seq BC\abar$ witnessing $K_n(B/C\abar)$. We have $D\cap B\neq\emptyset$. Moreover, $\neg K_n(B/C)$ implies $D\cap\abar\neq\emptyset$. To ease notation, let
$$
D\cap B=\{b_0,\ldots,b_k\}\text{ and }D\cap \abar=\{a_0,\ldots,a_m\}.
$$
Note that $k<n-1$. Define $A_0=D\cap C\abar$ and $B_0=\{b^0_0,\ldots,b^k_k\}$. We make the following observations.
\begin{enumerate}
\item If $u,v\in A_0$ are distinct then $u\pR v$.
\item If $b^i_i,b^j_j\in B_0$ are distinct, then $b^i_i\pR b^j_j$ by construction of $\Gamma(C\bbar,B)$.
\item If $c\in A_0\cap C$ and $b^j_j\in B_0$ then $b^j_j\pR c$ since $b_j\pR c$ and $\bbar^j\equiv_C\bbar$.
\item If $a_i\in A_0\cap\abar$ and $b^j_j\in B_0$ then, since $\abar$ is an optimal solution of $\pi(\xbar)$, we have
$$
a_i\pR b_j\Rightarrow\left(\vphi(\xbar,\bbar)\rhd x_i\pR b_j\right)\Rightarrow\left(\vphi(\xbar,\bbar^j)\rhd x_i\pR b^j_j\right)\Rightarrow a_i\pR b^j_j.
$$
\end{enumerate}
These observations imply $A_0B_0\cong K_n$, which is a contradiction since $A_0B_0\subset\H$.
\end{proof}

\begin{theorem}\label{Tn dividing}
Let $C\subset\H_n$, $\vphi(\xbar,\ybar)\in\cL_R(C)$, and $\bbar\in\H_n\backslash C$ such that $\vphi(\xbar,\bbar)$ is consistent. Then $\vphi(\xbar,\bbar)$ divides over $C$ if and only if $K^\vphi_n(\bbar/C)$.
\end{theorem}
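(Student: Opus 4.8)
The plan is to prove Theorem \ref{Tn dividing} by establishing the two directions separately, noting that Lemma \ref{Gamma} already does most of the work for one of them.

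\medskip

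\textbf{The ($\Leftarrow$) direction.} Suppose $K^\vphi_n(\bbar/C)$ holds, witnessed by some $B\seq\bbar$ with $0<|B|<n$. The plan is to apply Lemma \ref{Gamma} directly. By part $(a)$, the graph $\Gamma(C\bbar,B)$ is $K_n$-free, so by Construction \ref{Gamma def} we may embed it into $\H_n$ over $C$, obtaining a genuine $C$-indiscernible sequence $\Gamma_0(C\bbar,B)=(\bbar^l)_{l<\omega}$ with $\bbar^0=\bbar$. By part $(b)$, the family $\{\vphi(\xbar,\bbar^l):l<\omega\}$ is $(n-1)$-inconsistent, hence inconsistent. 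This is exactly the definition of dividing, so $\vphi(\xbar,\bbar)$ divides over $C$. This direction should be essentially immediate given the preceding lemma.

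\medskip

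\textbf{The ($\Rightarrow$) direction.} This is where the real work lies. Suppose $\vphi(\xbar,\bbar)$ divides over $C$; I want to show $K^\vphi_n(\bbar/C)$, i.e. to exhibit a witnessing $B\seq\bbar$ with $0<|B|<n$ satisfying $\neg K_n(B/C)$ and $K_n(B/\abar C)$ for every $\abar\models\vphi(\xbar,\bbar)$. Dividing gives a $C$-indiscernible sequence $(\bbar^l)_{l<\omega}$ with $\bbar^0=\bbar$ such that $\bigcup_l\vphi(\xbar,\bbar^l)$ is inconsistent with $T_n$. The strategy is to argue contrapositively against consistency: if no suitable $B$ existed, I would produce an optimal solution $\abar$ of $\bigcup_l\vphi(\xbar,\bbar^l)$ in $\H_n$, contradicting inconsistency. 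The natural candidate is to take the ``optimal'' graph on $\xbar C (\bbar^l)_l$ dictated by $\vphi$ across all copies, add no extra edges, and verify it is $K_n$-free so that an optimal solution exists in $\H_n$ by the consistency criterion for $R$-types. The key point is that a $K_n$ in this candidate graph must, by Lemma \ref{no vertical}, use distinct coordinates from the indiscernible sequence, and tracing which edges are forced by $\vphi$ should reveal a set $B\seq\bbar$ that is $n$-bound to $C\abar$ but not to $C$ — precisely a witness to $K^\vphi_n(\bbar/C)$. So failure of the $\vphi$-$n$-bound condition must translate into $K_n$-freeness of the optimal configuration, giving consistency and contradicting dividing.

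\medskip

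\textbf{Main obstacle.} The hard part will be the bookkeeping in the ($\Rightarrow$) direction: showing that inconsistency of $\bigcup_l\vphi(\xbar,\bbar^l)$ forces the existence of a witnessing $B$, which amounts to reverse-engineering the $K_n$ obstruction. Concretely, I expect to need that inconsistency is witnessed on finitely many copies (by compactness and $\aleph_0$-categoricity), then to analyze a minimal $K_n$-obstruction to an optimal solution, using Lemma \ref{no vertical}$(a)$ to control how such an obstruction meets the columns of the sequence. Identifying the correct $B$ — and checking both that $\neg K_n(B/C)$ (so that on a single copy $\vphi(\xbar,\bbar)$ stays consistent) and that $K_n(B/\abar C)$ holds for \emph{every} solution $\abar$ (the uniform clause) — is the delicate step, since the latter must hold for all solutions simultaneously rather than for the specific one produced. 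I would likely handle this by first proving a ``local'' version on a bounded number of copies and then invoking indiscernibility together with the freedom in Construction \ref{Gamma def} to glue, mirroring the structure already used in Lemma \ref{Gamma}.
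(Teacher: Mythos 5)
Your overall architecture is the same as the paper's. The ($\Leftarrow$) direction is complete and is exactly the paper's argument: Lemma \ref{Gamma} plus Construction \ref{Gamma def} give the indiscernible sequence and the $(n-1)$-inconsistency. For ($\Rightarrow$), the paper realizes your ``optimal configuration across all copies'' concretely as an optimal realization $\dbar\in\G$ of $\pi(\xbar)=\{\vphi(\xbar,\bbar^l):l<\omega\}$, which exists by Fact \ref{T0 forking} since $\vphi\in\cL_R(C)$ cannot divide in the random graph; then $G\dbar$ (with $G=C\cup\bigcup_l\bbar^l$) cannot be $K_n$-free, else it embeds in $\H_n$ over $G$ and contradicts inconsistency. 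So far your plan and the paper coincide.

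However, the part you defer as ``bookkeeping'' is where the content lives, and your sketch omits one necessary case and leaves the two key verifications unexecuted. First, given $K_n\cong W\seq G\dbar$, you must rule out $W\cap G\seq C$ before any nonempty $B\seq\bbar$ can be extracted; a priori the obstruction could live entirely in $C\cup\dbar$. The paper excludes this by replacing the $\dbar$-part of $W$ with an arbitrary solution $\abar\in\H_n$ of $\vphi(\xbar,\bbar)$: optimality of $\dbar$ means every edge of $W$ incident to $\dbar$ corresponds to a conjunct of $\vphi$, so $(W\cap C)\cup\{a_1,\ldots,a_m\}\cong K_n$ inside $\H_n$, a contradiction. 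Second, the same optimality-transfer is the mechanism behind your ``uniform clause'': writing $W\cap(G\setminus C)=\{b^{l_1}_1,\ldots,b^{l_k}_k\}$ (distinct coordinates by Lemma \ref{no vertical}) and $B=\{b_1,\ldots,b_k\}$, each edge $d_i\pR b^{l_s}_s$ forces $\vphi(\xbar,\bbar^{l_s})\rhd x_i\pR b^{l_s}_s$, hence by $C$-indiscernibility $\vphi(\xbar,\bbar)\rhd x_i\pR b_s$, hence $a_i\pR b_s$ for \emph{every} solution $\abar$; this makes $(W\cap C)\cup B\cup\{a_1,\ldots,a_m\}$ a uniform witness of $K_n(B/C\abar)$. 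Third, $\neg K_n(B/C)$ needs its own short argument: a witness $X\seq BC$ would, since the $b^{l_s}_s$ are pairwise adjacent (they lie in $W$) and $\bbar^{l_s}\equiv_C\bbar$, yield an actual copy of $K_n$ in $\H_n$. Finally, your proposed detours are unnecessary: no compactness or $\aleph_0$-categoricity is needed (the obstruction $W$ is already finite), and there is no gluing or second appeal to Construction \ref{Gamma def} in this direction --- the argument is a direct analysis of a single $K_n$ in $G\dbar$.
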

\begin{proof}
$(\Leftarrow)$: Suppose $B\seq\bbar$ witnesses $K^\vphi_n(\bbar/C)$. Then $\Gamma(C\bbar,B)\seq\H_n$ and $\{\vphi(\xbar,\bbar^l):l<\omega\}$ is $(n-1)$-inconsistent by \Cref{Gamma}. So $\vphi(\xbar,\bbar)$ divides over $C$.

$(\Rightarrow)$: Suppose $\vphi(\xbar,\bbar)$ divides over $C$. Then there is a $C$-indiscernible sequence $(\bbar^l)_{l<\omega}$, with $\bbar^0=\bbar$, such that $\{\vphi(\xbar,\bbar^l):l<\omega\}$ is inconsistent.

Let $G=C\cup\bigcup_{l<\omega}\bbar^l$. Consider $G$ as a subgraph of $\G$, and note that $(\bbar^l)_{l<\omega}$ is still $C$-indiscernible in $\G$. Since $\vphi(\xbar,\ybar)\in\cL_R(C)$ and $\vphi(\xbar,\bbar)$ is consistent (in $\G$), it follows from \Cref{T0 forking} that $\vphi(\xbar,\bbar)$ does not divide over $C$ in $\G$. Therefore there is an optimal realization $\dbar\in\G$ of $\pi(\xbar):=\{\vphi(\xbar,\bbar^l):l<\omega\}$. If $G\dbar$ is $K_n$-free then $G\dbar$ embeds in $\H_n$ over $G$, which is a contradiction. Therefore there is $K_n\cong W\seq G\dbar$.  Note that $W\cap\dbar\neq\emptyset$ since $G$ is $K_n$-free. Without loss of generality, let $W\cap\dbar=\{d_1,\ldots,d_m\}$. 

Suppose, towards a contradiction, that $W\cap G\seq C$. Let $\abar\in\H_n$ be a solution to $\vphi(\xbar,\bbar)$. Since $\dbar$ is an optimal realization of $\pi(\xbar)$, we make the following observations.
\begin{enumerate}
\item If $1\leq i,j\leq m$ are distinct then $a_i\pR a_j$.
\item If $b\in W\cap G$ then $a_i\pR b$ for all $1\leq i\leq m$.
\end{enumerate}
Therefore, $K_n\cong(W\cap G)\cup\{a_1,\ldots,a_m\}$, which is a contradiction, and so $W\cap(G\backslash C)\neq\emptyset$.

Let $W\cap(G\backslash C)=\{b^{l_1}_{j_1},\ldots,b^{l_k}_{j_k}\}$ and note that $1\leq t<n$. By Lemma \ref{no vertical}, $j_s\neq j_t$ for all $s\neq t$, so without loss of generality, let $W\cap(G\backslash C)=\{b^{l_1}_1,\ldots,b^{l_k}_k\}$. Let $B=\{b_1,\ldots,b_k\}$.

\noindent\textit{Claim 1}: $\neg K_n(B/C)$.

\noindent\textit{Proof}: Suppose $X\seq BC$ witnesses $K_n(B/C)$. By indiscernibility, if $B_0=\{b^{l_s}_s:b_s\in B\cap X\}$, then $(X\cap C)\cup B_0$ witnesses that $B_0$ is $n$-bound to $C$. But $b^{l_s}_s\pR b^{l_t}_t$ for all $s\neq t$, so $(X\cap C)\cup B_0\cong K_n$, which is a contradiction.\claim

\noindent\textit{Claim 2}: If $\abar\in\H_n$ is a solution of $\vphi(\xbar,\bbar)$ then $K_n(B/C\abar)$.

\noindent\textit{Proof}: We show $(W\cap C)\cup B\cup\{a_1,\ldots,a_m\}$ witnesses $K_n(B/C\abar)$, which means verifying all of the necessary relations. Recall that $\dbar$  is an optimal solution to $\pi(\xbar)$. 
\begin{enumerate}
\item If $1\leq i\neq j\leq m$ then $d_i\pR d_j\Rightarrow\left(\vphi(\xbar,\bbar)\rhd x_i\pR x_j\right)\Rightarrow a_i\pR a_j$.
\item If $1\leq i\leq m$ and $c\in W\cap C$ then $d_i\pR c\Rightarrow\left(\vphi(\xbar,\bbar)\rhd x_i\pR c\right)\Rightarrow a_i\pR c$.
\item If $1\leq i\leq m$ and $1\leq s\leq k$ then $d_i\pR b^{l_s}_s\Rightarrow\left(\vphi(\xbar,\bbar^{l_s})\rhd x_i\pR b^{l_s}_s\right)\Rightarrow\left(\vphi(\xbar,\bbar )\rhd x_i\pR b_s\right)\Rightarrow a_i\pR b_s$.\claim
\end{enumerate}

Together, Claims 1 and 2 imply $K^\vphi_n(\bbar/C)$, as desired. 
\end{proof}

We can now give the full characterization of nondividing formulas in $T_n$, and the ternary relation $\ind^d$ on sets, which gives the analogy of Fact \ref{T0 forking} for $T_n$.

\begin{theorem}\label{div ind n}$~$
\begin{enumerate}[$(a)$]
\item Suppose $C\subset\H_n$, $\vphi(\xbar,\ybar)\in\cL_0(C)$, and $\bbar\in\H_n\backslash C$ such that $\vphi(\xbar,\bbar)$ is consistent. Then $\vphi(\xbar,\bbar)$ divides over $C$ if and only if $\vphi(\xbar,\bbar)\rhd x_i=b$ for some $b\in\bbar$, or $\vphi(\xbar,\ybar)\in\cL_R(C)$ and $K^\vphi_n(\bbar/C)$.
\item Suppose $A,B,C\subset\H_n$. Then $A\ind^d_C B$ if and only if 
\begin{enumerate}[$(i)$]
\item $A\cap B\seq C$, and
\item for all $\bbar\in B\backslash C$, if $K_n(\bbar/AC)$ then $K_n(\bbar/C)$.
\end{enumerate}
\end{enumerate}
\end{theorem}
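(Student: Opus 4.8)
The plan is to derive both parts from Theorem \ref{Tn dividing}, which already settles the case $\vphi\in\cL_R(C)$: part (a) only adds bookkeeping for equality conjuncts, while part (b) is a translation of the formula-level statement into the language of $\ind^d$.

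For part (a), the key observation is that since $\vphi(\xbar,\ybar)\in\cL_0(C)$, the only equality conjuncts it can contain have the form $x_i\pR y_j$ replaced by $x_i=y_j$, and $\cL_R(C)$ consists exactly of those $\cL_0(C)$-formulas with no such conjunct. Hence ``$\vphi(\xbar,\bbar)\rhd x_i=b$ for some $b\in\bbar$'' and ``$\vphi(\xbar,\ybar)\in\cL_R(C)$'' are complementary. If $\vphi\in\cL_R(C)$, the claim is immediate from Theorem \ref{Tn dividing}. If instead $\vphi(\xbar,\bbar)\rhd x_i=b$, I would extend $\bbar$ to the $C$-indiscernible sequence $(\bbar^l)_{l<\omega}$ of Construction \ref{Gamma def}, whose coordinates satisfy $b^l_i\neq b^m_i$ for $l<m$; then $\vphi(\xbar,\bbar^l)\rhd x_i=b^l$ with the $b^l$ distinct, so $\{\vphi(\xbar,\bbar^l):l<\omega\}$ is $2$-inconsistent and $\vphi(\xbar,\bbar)$ divides. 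Conversely, if $\vphi(\xbar,\bbar)$ divides but has no conjunct $x_i=y_j$, then $\vphi\in\cL_R(C)$ and Theorem \ref{Tn dividing} yields $K^\vphi_n(\bbar/C)$.

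For part (b), I would first reduce to formulas. We may assume $A\cap C=\emptyset$, since coordinates of $A$ lying in $C$ do not affect dividing over $C$. By quantifier elimination together with Proposition \ref{facts}(a) (and Proposition \ref{facts}(b) to discard formulas with parameters only from $C$), $\tp(A/BC)$ divides over $C$ if and only if some $\vphi(\xbar,\bbar)\in\tp(A/BC)$ divides over $C$, where $\vphi(\xbar,\ybar)\in\cL_0(C)$ and $\bbar$ is a nonempty tuple from $B\backslash C$. Applying part (a), $A\nind^d_C B$ holds if and only if either some such $\vphi$ has a conjunct $x_i=y_j$ --- equivalently some $a\in A$ equals some $b\in B\backslash C$, i.e. $A\cap B\not\seq C$, the failure of (i) --- or some $\vphi\in\cL_R(C)$ in the type satisfies $K^\vphi_n(\bbar/C)$. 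It therefore remains to show that, assuming (i), the latter occurs if and only if (ii) fails.

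Suppose first that $\vphi\in\cL_R(C)$ lies in $\tp(A/BC)$ with $K^\vphi_n(\bbar/C)$ witnessed by $B^*\seq\bbar$, so $\neg K_n(B^*/C)$ and $K_n(B^*/\abar C)$ for every solution $\abar$. Taking $\abar$ to be the relevant tuple from $A$ and using that (i) forces $B^*\cap AC=\emptyset$, any witness for $K_n(B^*/\abar C)$ is also a witness for $K_n(B^*/AC)$; thus $B^*\seq B\backslash C$ satisfies $K_n(B^*/AC)$ and $\neg K_n(B^*/C)$, so (ii) fails. For the reverse, I would begin with a tuple $\bbar\seq B\backslash C$ having $K_n(\bbar/AC)$ and $\neg K_n(\bbar/C)$, fix a witness $D$ of $K_n(\bbar/AC)$, and set $B^*=D\cap\bbar$ and $\abar=D\cap A$ (which is nonempty, since otherwise $D$ would witness $K_n(\bbar/C)$). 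Let $\vphi(\xbar,\ybar)\in\cL_R(C)$ be the conjunction of exactly the positive edges of $D$: $x_i\pR x_j$ for distinct $a_i,a_j\in\abar$, $x_i\pR c$ for $c\in D\cap C$, and $x_i\pR y_k$ for $b_k\in B^*$. Then $A\models\vphi(\xbar,\bbar)$, and $\neg K_n(\bbar/C)$ gives $\neg K_n(B^*/C)$ by monotonicity of $K_n$ in its first argument. The essential point --- and the step I expect to be the main obstacle --- is verifying condition $(ii)$ of $K^\vphi_n$ for \emph{every} solution $\abar'$, not merely for $A$: because $\vphi$ asserts only edges, each $a'_i$ is adjacent to every other $a'_j$, to every $c\in D\cap C$, and to every $b_k\in B^*$, which forces these to be pairwise distinct vertices, so that $B^*\cup(D\cap C)\cup\{a'_1,\ldots\}$ has exactly $n$ elements and, together with the edges of $D$ internal to $C$ and between $\bbar$ and $C$, witnesses $K_n(B^*/\abar' C)$. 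Choosing $\vphi$ to record only the positive edges of $D$ is what makes this work: it is strong enough to force the ``almost-$K_n$'' configuration for any solution, yet weak enough to be satisfied by $A$ itself. The remaining ingredients --- the two monotonicity facts for $K_n$ and the transfer of witnesses between $\abar C$ and $AC$ --- are routine once (i) supplies the relevant disjointness.
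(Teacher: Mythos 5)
Your proposal is correct and follows essentially the same route as the paper: part (a) reduces to Theorem \ref{Tn dividing} (with the equality-conjunct case handled directly via an indiscernible sequence of pairwise-distinct copies of $\bbar$), and part (b) is proved by passing between a witness $X$ of $K_n(\bbar/AC)$ and an $\cL_R(C)$-formula asserting exactly the positive edges of that witness involving $\xbar$ --- the same formula the paper writes down, up to the redundant conjuncts $\ybar\pR C_0$ and ``$C_0$ is complete.'' Your verification that an arbitrary solution $\abar'$ of this formula yields a witness of $K_n(B^*/\abar'C)$ is the point the paper leaves implicit, and you carry it out correctly.
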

\begin{proof}
Part $(a)$ follows immediately from Theorem \ref{Tn dividing}. For part $(b)$, let $\abar$ enumerate $A$. Note that $\abar$ is an optimal solution to $\tp(\abar/BC)$. 

$(\Rightarrow)$: Suppose $A\ind^d_C B$. Then $A\cap B\seq C$ is immediate. For condition $(ii)$, fix $\bbar\in B\backslash C$ and suppose, towards a contradiction, that $\neg K_n(\bbar/C)$ and $K_n(\bbar/AC)$. Fix $X\seq AC\bbar$ witnessing $K_n(\bbar/AC)$. Let $C_0=X\cap C$. Without loss of generality, let $X\cap A=\{a_1,\ldots,a_m\}$ and $X\cap \bbar=\bbar_*=\{b_1,\ldots,b_k\}$. Define $\cL_R(C)$-formula $\vphi(\xbar,\ybar)$, where $x_i=(x_1,\ldots,x_m)$, $\ybar=(y_1,\ldots,y_k)$, and $\vphi(\xbar,\ybar)$ expresses
\begin{enumerate}[$\bullet$]
\item $\xbar$ is a complete graph and $\xbar\pR\ybar$,
\item $\xbar\pR C_0$ and $\ybar\pR C_0$,
\item $C_0$ is a complete graph.
\end{enumerate}
Then $K_n^\vphi(\bbar_*/C)$ and $\vphi(\xbar,\bbar_*)\in\tp(A/BC)$. Therefore $A\nind^d_C B$ by Theorem \ref{Tn dividing}, which is a contradiction.

$(\Leftarrow)$: Suppose $A\nind^d_C B$. Then there is some $\vphi(\xbar,\ybar)\in\cL_0(C)$ and $\bbar\in B\backslash C$ such that $\vphi(\xbar,\bbar)$ divides over $C$ and $\vphi(\xbar,\bbar)\in\tp(A/BC)$. If $\vphi(\xbar,\ybar)\rhd x_i=y_j$ for some $i,j$, then $a_i=b_j\in(A\cap B)\backslash C$ and $(i)$ fails.  Otherwise, $\vphi(\xbar,\ybar)\in\cL_R(C)$ and  $K^\vphi_n(\bbar/C)$. It follows that $\neg K_n(\bbar/C)$ and $K_n(\bbar/AC)$, so $(ii)$ fails.
\end{proof}

The theorem translates the model theoretic notion of dividing to the graph theoretic notion $K^\vphi_n(\bbar/C)$. Although the definition of $K^\vphi_n(\bbar/C)$  implies that we must check all solutions of $\vphi$, it suffices to check an optimal one.

\begin{corollary}\label{Tn dividing 2}
Let $C\subset\H_n$, $\vphi(\xbar,\ybar)\in\cL_R(C)$, and $\bbar\in\H_n\backslash C$ such that $\vphi(\xbar,\bbar)$ is consistent. Let $\abar$ be an optimal solution. Then $\vphi(\xbar,\bbar)$ divides over $C$ if and only if there is $B\seq\bbar$ such that $\neg K_n(B/C)$ and $K_n(B/C\abar)$.
\end{corollary}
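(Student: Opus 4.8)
The plan is to reduce Corollary~\ref{Tn dividing 2} to Theorem~\ref{Tn dividing} by showing that the condition ``there is $B\seq\bbar$ with $\neg K_n(B/C)$ and $K_n(B/C\abar)$ for one fixed optimal $\abar$'' is equivalent to $K^\vphi_n(\bbar/C)$. The forward implication of the desired statement is trivial: if $K^\vphi_n(\bbar/C)$ holds, witnessed by some $B\seq\bbar$, then by definition $\neg K_n(B/C)$ and $K_n(B/\abar C)$ holds for \emph{all} solutions $\abar\models\vphi(\xbar,\bbar)$, in particular for the chosen optimal one. So the only work is the converse: assuming the condition for a single optimal solution $\abar$, I must upgrade it to the same condition for \emph{every} solution, which is exactly condition $(ii)$ in Definition~\ref{bound def}(2).

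First I would fix $B\seq\bbar$ with $0<|B|<n$ satisfying $\neg K_n(B/C)$ and $K_n(B/C\abar)$ for the given optimal $\abar$, and fix an arbitrary other solution $\abar'\models\vphi(\xbar,\bbar)$. The goal is to show $K_n(B/C\abar')$. Let $X\seq BC\abar$ witness $K_n(B/C\abar)$, and split $X\cap\abar=\{a_{i_1},\dots,a_{i_m}\}$. The natural candidate witness for $K_n(B/C\abar')$ is $X'=(X\cap BC)\cup\{a'_{i_1},\dots,a'_{i_m}\}$, i.e.\ replace each coordinate $a_{i_s}$ of the optimal solution by the corresponding coordinate $a'_{i_s}$ of the new solution. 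The key point is that each edge relation required by Definition~\ref{bound def}(1) for $X'$ is guaranteed because $\vphi(\xbar,\ybar)\in\cL_R(C)$ and optimality: since $\abar$ is optimal, any edge $a_{i_s}\pR v$ appearing in $X$ (for $v\in C$, or $v=b\in B$, or $v=a_{i_t}$) forces the corresponding atomic conjunct $x_{i_s}\pR c$, $x_{i_s}\pR b$, or $x_{i_s}\pR x_{i_t}$ to lie in $\vphi(\xbar,\bbar)$; and then \emph{any} solution $\abar'$ must satisfy that conjunct, yielding the analogous edge for $\abar'$. The edges internal to $X\cap BC$ are unchanged, so they persist. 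Thus $X'$ witnesses $K_n(B/C\abar')$.

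The main obstacle I anticipate is bookkeeping rather than conceptual: I must check that the nonemptiness conditions of Definition~\ref{bound def}(1)$(i)$ survive the substitution (namely $X'\cap C\neq\emptyset$ and $X'\cap B\neq\emptyset$, and $|X'|=n$), and handle the case where $X$ meets $\abar$ not at all. If $X\cap\abar=\emptyset$ then $X$ already witnesses $K_n(B/C)$, contradicting $\neg K_n(B/C)$; so $X\cap\abar\neq\emptyset$, which keeps $|X'|=|X|=n$ and ensures the substitution is nontrivial. The requirement that distinct vertices of $X'\cap(C\abar')$ are $R$-related decomposes into the $C$-internal edges (unchanged), the $C$-to-$\abar'$ edges and the $\abar'$-internal edges (both obtained from optimality of $\abar$ as above), so I must be careful to invoke optimality in the direction that reads off a conjunct of $\vphi$ from an edge of the \emph{optimal} solution. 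Once $K_n(B/C\abar')$ is established for all $\abar'$, together with $\neg K_n(B/C)$ this is precisely $K^\vphi_n(\bbar/C)$ witnessed by $B$, and Theorem~\ref{Tn dividing} gives that $\vphi(\xbar,\bbar)$ divides over $C$, completing the equivalence.
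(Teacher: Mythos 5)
Your proposal is correct and follows essentially the same route as the paper: both reduce to Theorem \ref{Tn dividing}, note the forward direction is immediate, and for the converse take a witness $X$ of $K_n(B/C\abar)$ and replace the $\abar$-coordinates by the corresponding coordinates of an arbitrary solution, using optimality of $\abar$ to read each needed edge off as a conjunct of $\vphi$ and hence transfer it. Your explicit treatment of the $\abar'$-internal edges and of the case $X\cap\abar=\emptyset$ is slightly more careful than the paper's write-up, but the argument is the same.
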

\begin{proof}
By Theorem \ref{Tn dividing}, we need to show $K_n^\vphi(\bbar/C)$ if and only if there is $B\seq\bbar$ such that $\neg K_n(B/C)$ and $K_n(B/C\abar)$. The forward direction is clear.

Conversely, suppose $B\seq\bbar$ such that $\neg K_n(B/C)$ and $K_n(B/C\abar)$. Let $\dbar$ be any solution to $\vphi(\xbar,\bbar)$. We want to show $K_n(B/C\dbar)$. Let $B_0\seq BC\abar$ witness $K_n(B/C\abar)$. Define $C_0=(B_0\cap C)$ and $D=\{d_i:a_i\in B_0\cap\abar\}$.
Since  $\abar$ is optimal, we can make the following observations to show that $B_0C_0D$ witnesses $K_n(B/C\dbar)$.
\begin{enumerate}
\item If $c_1,c_2\in C_0$ then $c_1\pR c_2$ by assumption.
\item If $c\in C_0$ and $d_i\in D$ then $a_i\pR c\Rightarrow\left(\vphi(\xbar,\bbar)\rhd x_i\pR c\right)\Rightarrow d_i\pR c$.
\item If $c\in C_0$ and $b\in B_0$ then $b\pR c$ by assumption.
\item If $d_i\in D$ and $b\in B_0$ then $a_i\pR b\Rightarrow\left(\vphi(\xbar,\bbar)\rhd x_i\pR b\right)\Rightarrow d_i\pR b$.\qedhere
\end{enumerate}
\end{proof}

We end this section by giving some examples and traits of dividing formulas in $T_n$. We will begin using the following notation. If $A$ and $B$ are sets we write $A\pR B$ to mean $a\pR b$ for all $a\in A$ and $b\in B$. On other hand, $A\nR B$ means $\neg a\pR b$ for all $a\in A$ and $b\in B$.

\begin{corollary}\label{dividing example}
Suppose $C\subset\H_n$ and $b_1,\ldots,b_{n-1}\in\H_n\backslash C$ are distinct. Then the formula
$$
\vphi(x,\bbar):=\bigwedge_{i=1}^{n-1}xRb_i
$$
divides over $C$ if and only if $\neg K_n(\bbar/C)$.
\end{corollary}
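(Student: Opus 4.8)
The plan is to deduce everything from \Cref{Tn dividing 2}. First I would record that $\vphi(x,\bbar)\in\cL_R(C)$, since it is a conjunction of edge relations with no equalities and the $b_i$ lie outside $C$, so that the dividing machinery applies. The one preliminary is consistency: an optimal solution of $\vphi(x,\bbar)$ is a single new vertex $a$ joined to each $b_i$ and having no edge into $C$ (so $\neg a R c$ for all $c\in C$), and the resulting configuration is $K_n$-free exactly when $\{b_1,\dots,b_{n-1}\}\not\cong K_{n-1}$. Hence $\vphi(x,\bbar)$ is consistent iff the $b_i$ do not already form a complete graph.

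I would dispose of the degenerate case first. If $\{b_1,\dots,b_{n-1}\}\cong K_{n-1}$ then $\vphi(x,\bbar)$ is inconsistent, so it divides over $C$ trivially (extend $\bbar$ to any $C$-indiscernible sequence; each $\vphi(x,\bbar^l)$ is inconsistent, hence so is the union). At the same time $\neg K_n(\bbar/C)$ holds: a witness $B_0$ to $K_n(\bbar/C)$ would be a copy of $K_n$ inside $\H_n$, since its $C$-part is complete, all of its $b$--$c$ edges are present, and its $\bbar$-part is complete as a subset of $K_{n-1}$, contradicting $K_n$-freeness. So both sides of the equivalence hold in this case.

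For the main case, assume $\vphi(x,\bbar)$ is consistent and fix an optimal solution $a$, so that $a R b_i$ for every $i$ and $\neg a R c$ for every $c\in C$. By \Cref{Tn dividing 2}, $\vphi(x,\bbar)$ divides over $C$ iff some $B\seq\bbar$ satisfies $\neg K_n(B/C)$ and $K_n(B/Ca)$. For the direction $\neg K_n(\bbar/C)\Rightarrow$ divides, I take $B=\bbar$ and check that $B_0=\{a,b_1,\dots,b_{n-1}\}$ witnesses $K_n(\bbar/Ca)$: it has $n$ vertices, meets both sides, its $Ca$-part is the single vertex $a$, and $a R b_i$ for all $i$; combined with $\neg K_n(\bbar/C)$ this yields dividing. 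For the converse I argue the contrapositive: assuming $K_n(\bbar/C)$, I show that every $B\seq\bbar$ with $K_n(B/Ca)$ already satisfies $K_n(B/C)$, so no $B$ can witness dividing. Given a witness $B_0\seq B\cup Ca$ to $K_n(B/Ca)$, either $a\notin B_0$, in which case $B_0\seq\bbar C$ witnesses $K_n(B/C)$ directly; or $a\in B_0$, in which case completeness of the $Ca$-part of $B_0$ together with $\neg a R c$ forces $B_0\cap C=\emptyset$, so $B_0=\{a\}\cup(B_0\cap\bbar)$ has $n-1$ vertices in $\bbar$, whence $B_0\cap\bbar=\bbar$ and $B=\bbar$, reducing to the hypothesis $K_n(\bbar/C)$.

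The main obstacle is the subcase $a\in B_0$ in the converse: one must use the optimality of $a$ --- specifically that it has no edge into $C$ --- to see that $a$ cannot be combined with any vertex of $C$ in a single witness, which is exactly what pins down $B=\bbar$ and lets the assumption $K_n(\bbar/C)$ close the argument. The consistency dichotomy is a minor but genuinely necessary bookkeeping point, since \Cref{Tn dividing 2} is stated only for consistent formulas.
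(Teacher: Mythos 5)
Your proof is correct and follows essentially the same route as the paper's: both dispose of the inconsistent case $\bbar\cong K_{n-1}$ separately, both get dividing from $\neg K_n(\bbar/C)$ by letting $\{a\}\cup\bbar$ witness $K_n(\bbar/Ca)$, and both prove the other direction by using a solution with no edges into $C$ to force the witness to avoid $C$ and hence pin down $B=\bbar$. The only cosmetic difference is that you route through \Cref{Tn dividing 2} and argue the contrapositive, while the paper applies \Cref{Tn dividing} directly to a realization $a$ with $a\nR C$.
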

\begin{proof}
First, if $\bbar\cong K_{n-1}$ then $\vphi(x,\bbar)$ is inconsistent and thus divides over $C$. Furthermore, in this case $\neg K_n(\bbar/C)$ since, if so, then there is some $c\in C$ such that $c\pR\bbar$ and so $c\bbar\cong K_n$. Therefore we may assume $\bbar\not\cong K_{n-1}$.

$(\Rightarrow)$: If $\vphi(x,\bbar)$ divides over $C$ then, by Theorem \ref{Tn dividing}, there is some $B\seq\bbar$ such that $\neg K_n(B/C)$ and $K_n(B/Ca)$ for any $a\models\vphi(x,\bbar)$. Let $a\models\vphi(x,\bbar)$ such that $a\nR C$. Let $X\seq CBa$ witness $K_n(B/Ca)$. Then $\neg K_n(B/C)$ implies $a\in X$, and so $X\cap C=\emptyset$ since $a\nR C$. Therefore $X\seq Ba\seq\bbar a$, $|X|=n$, and $|\bbar|=n-1$. It follows that $B=\bbar$, and so $\neg K_n(\bbar/C)$.

$(\Leftarrow)$: Note that if $a$ realizes $\vphi(x,\bbar)$, then $K_n(\bbar/a)$. So if $\neg K_n(\bbar/C)$ then $\bbar$ itself witnesses $K^\vphi_n(\bbar/C)$. By Theorem \ref{Tn dividing}, $\vphi(x,\bbar)$ divides over $C$.
\end{proof}

\begin{corollary}\label{no R}
Let $C\subset\H$ and $\vphi(\xbar,\ybar)\in\cL_R(C)$ such that $\vphi(\xbar,\ybar)\nrhd x_i\pR y_j$ for all $i,j$. If $\bbar\in\H\backslash C$ such that $\vphi(\xbar,\bbar)$ is consistent, then $\vphi(\xbar,\bbar)$ does not divide over $C$.
\end{corollary}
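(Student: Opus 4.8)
The plan is to apply \Cref{Tn dividing 2}, which lets us test dividing against a single optimal solution. So I would fix an optimal solution $\abar$ of $\vphi(\xbar,\bbar)$. By that corollary, $\vphi(\xbar,\bbar)$ divides over $C$ if and only if there is some $B\seq\bbar$ with $\neg K_n(B/C)$ and $K_n(B/C\abar)$, and hence it suffices to show that no such $B$ exists.

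The crux of the argument, and the step I expect to carry all the weight, is the observation that an optimal solution of $\vphi(\xbar,\bbar)$ has no edges to $\bbar$. Indeed, regarding $\vphi(\xbar,\bbar)$ as an $R$-type $\pi(\xbar)$ over $C\bbar$, clause $(iii)$ of the definition of optimal solution forces $a_i\pR b_j$ to hold if and only if $\pi(\xbar)\rhd x_i\pR b_j$, i.e.\ if and only if $\vphi(\xbar,\ybar)\rhd x_i\pR y_j$. Since we are assuming $\vphi(\xbar,\ybar)\nrhd x_i\pR y_j$ for all $i,j$, this gives $\neg a_i\pR b_j$ for every $i$ and $j$.

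With this established, I would argue by contradiction. Suppose $B\seq\bbar$ satisfies $\neg K_n(B/C)$ and $K_n(B/C\abar)$, and let $B_0\seq BC\abar$ witness $K_n(B/C\abar)$, so that $|B_0|=n$, $B_0\cap B\neq\emptyset$, and $B_0\cap C\abar\neq\emptyset$. Fix $u\in B_0\cap B\seq\bbar$. By the relevant clause of the definition of $n$-boundness, $u\pR v$ for every $v\in B_0\cap C\abar$; but $u$ is not $R$-related to any element of $\abar$ by the observation above, so we must have $B_0\cap\abar=\emptyset$. Consequently $B_0\seq BC$ and $B_0\cap C=B_0\cap C\abar\neq\emptyset$, so $B_0$ itself witnesses $K_n(B/C)$, contradicting $\neg K_n(B/C)$. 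Hence no such $B$ exists, and \Cref{Tn dividing 2} gives that $\vphi(\xbar,\bbar)$ does not divide over $C$. Apart from the edge-freeness observation, every step is a direct unwinding of the definition of witnessing $n$-boundness and should be routine.
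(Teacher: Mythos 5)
Your proposal is correct and is essentially the paper's own argument: both reduce to Corollary \ref{Tn dividing 2}, observe that an optimal solution $\abar$ satisfies $\abar\nR\bbar$ because $\vphi(\xbar,\ybar)\nrhd x_i\pR y_j$, and conclude that no $B\seq\bbar$ can satisfy both $\neg K_n(B/C)$ and $K_n(B/C\abar)$. You merely spell out the short step the paper leaves implicit (that a witness $B_0$ of $K_n(B/C\abar)$ avoiding $\abar$ would already witness $K_n(B/C)$).
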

\begin{proof}
Suppose $B\seq\bbar$ is such that $\neg K_n(B/C)$. Let $\abar$ be an optimal solution to $\vphi(\xbar,\bbar)$. Then $\abar\nR B$, so $\neg K_n(B/C\abar)$. By Corollary \ref{Tn dividing 2}, $\vphi(\xbar,\bbar)$ does not divide over $C$.
\end{proof}

\begin{corollary}\label{parameters n}
Let $C\subset\H$ and $\vphi(\xbar,\ybar)\in\cL_R(C)$. Suppose $\bbar\in\H\backslash C$ such that $\vphi(\xbar,\bbar)$ is consistent and divides over $C$. Define
$$
R^\vphi=\{b\in C\bbar:\vphi(\xbar,\bbar)\rhd x_i\pR b\textnormal{ for some $i$}\}\cup\{x_i:\vphi(\xbar,\bbar)\rhd x_i\pR b\textnormal{ for some $b\in C\bbar$}\}.
$$
Then $|R^\vphi|\geq n$ and $|\bbar\cap R^\vphi|>1$.
\end{corollary}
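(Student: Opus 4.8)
The plan is to reduce everything to a single witnessing configuration and then count its pieces. By Corollary~\ref{Tn dividing 2}, I fix an optimal solution $\abar$ of $\vphi(\xbar,\bbar)$ together with a set $B\seq\bbar$ satisfying $\neg K_n(B/C)$ and $K_n(B/C\abar)$, and I let $B_0\seq BC\abar$ witness $K_n(B/C\abar)$. I then partition $B_0$ into $A_0=B_0\cap\abar$, $C_0=B_0\cap C$, and $B_1=B_0\cap B$; because $\abar$ is optimal over $C\bbar$ (so $\abar$ is disjoint from $C\bbar$) these three pieces are genuinely disjoint and $|A_0|+|C_0|+|B_1|=n$. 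The whole argument rests on the observation that, by conditions (ii) and (iii) of Definition~\ref{bound def}, every pair of vertices of $B_0$ is an edge except possibly pairs lying entirely inside $B_1$.

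First I would record two structural facts about this witness. The set $A_0$ is nonempty: otherwise $B_0\seq BC$ would itself witness $K_n(B/C)$ (the defining conditions for $C\abar$ collapse to those for $C$), contradicting $\neg K_n(B/C)$. Second — and this is the key point for the bound $|\bbar\cap R^\vphi|>1$ — one must have $|B_1|\geq 2$. Indeed $B_0\seq\H_n$, which is $K_n$-free, so $B_0\not\cong K_n$; since the only edges that Definition~\ref{bound def} does not force are those internal to $B_1$, there must be a non-edge inside $B_1$, and a non-edge requires at least two vertices.

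Next I would translate the forced edges of $B_0$ into conjuncts of $\vphi$ using optimality of $\abar$. Fixing some $a_i\in A_0$: for $c\in C_0$ the edge $a_i\pR c$ gives $\vphi\rhd x_i\pR c$, so $c\in R^\vphi$; for $b\in B_1\seq\bbar$ the edge $a_i\pR b$ gives $\vphi\rhd x_i\pR b$, so $b\in R^\vphi$; and for each $a_j\in A_0$, choosing any $b\in B_1$ (nonempty by the previous paragraph) the edge $a_j\pR b$ gives $\vphi\rhd x_j\pR b$, so $x_j\in R^\vphi$. Hence $C_0\cup B_1\cup\{x_j:a_j\in A_0\}\seq R^\vphi$, and these pieces are pairwise disjoint (parameters in $C$, parameters in $\bbar$, and variables), giving $|R^\vphi|\geq|C_0|+|B_1|+|A_0|=n$. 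Finally $B_1\seq\bbar\cap R^\vphi$ yields $|\bbar\cap R^\vphi|\geq|B_1|\geq 2$.

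The only genuine obstacle is the inequality $|B_1|\geq 2$; everything else is bookkeeping against optimality. The point to get exactly right is that the witness $B_0$ is a copy of $K_n$ with the sole possible exception of edges among the $B$-vertices, so $K_n$-freeness of $\H_n$ forces at least one of those edges to be absent and hence forces $B_1$ to have at least two elements. I would also be careful to confirm that the optimal solution $\abar$ is taken over $C\bbar$ rather than merely over $C$, since that disjointness is what makes the partition of $B_0$ — and therefore the final count $|R^\vphi|\geq n$ — exact.
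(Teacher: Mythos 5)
Your proof is correct and follows essentially the same route as the paper's: both extract a witness $X$ of $K_n(B/C\abar)$ for an optimal solution $\abar$, use optimality to convert the forced edges into conjuncts of $\vphi$ landing in $R^\vphi$, and use $K_n$-freeness of $\H_n$ to force at least two $\bbar$-vertices in the witness. Your write-up just makes explicit the disjointness of the three pieces and the ``only missing edges are inside $B_1$'' observation, which the paper leaves implicit.
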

\begin{proof}
By assumption, we have $K_n^\vphi(\bbar/C)$. If $\abar$ is an optimal solution of $\vphi(\xbar,\bbar)$ then there is some $X\seq C\bbar\abar$ witnessing $K_n(\bbar/C\abar)$. Note that $X\cap\abar\neq\emptyset$ since $\neg K_n(\bbar/C)$. Set $B=(X\cap C\bbar)\cup\{x_i:a_i\in X\}$. Then $|B|\geq n$ and $B\seq R^\vphi$ since $\abar$ is optimal. Finally, $|\bbar\cap B|=|\bbar\cap X|>1$, since otherwise $X\cong K_n$. 
\end{proof}

Corollary \ref{parameters n} says that if a formula from $\cL_R(C)$ divides then it needs to mention edges between at least $n$ vertices (and more than one parameter). This is not surprising since no consistent formula from $\cL_R(C)$ will divide in $T_0$, and so dividing in $T_n$ should come from the creation of a graph that is too close to $K_n$.

\section{Forking for complete types}\label{forksec}

In this section, we use our characterization of $\ind^d$ in $T_n$ to show that forking and dividing are the same for complete types. The proof takes two steps, the first of which is to prove \textit{full existence} for the following ternary relation on graphs. We take the following definition from \cite{Adgeo}.

\begin{definition}
Given $A,B,C\subset\H_n$, define \textbf{edge independence} by
$$
\textstyle A\ind^R_C B\Leftrightarrow A\cap B\seq C\text{ and there is no edge from $A\backslash C$ to $B\backslash C$}.
$$
\end{definition}

\begin{lemma}\label{fullext}
For all $A,B,C\subset\H_n$ there is $A'\equiv_C A$ such that $A'\ind^R_C B$.
\end{lemma}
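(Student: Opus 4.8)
The plan is to build $A'$ as a fresh copy of $A$ that is glued to $C$ exactly as $A$ is, but meets $B$ only inside $C$. Since $T_n$ has quantifier elimination, $A'\equiv_C A$ is equivalent to the existence of a graph isomorphism $A'\cup C\cong A\cup C$ fixing $C$ pointwise. Hence it suffices to produce a set $A'\seq\H_n$ with $A'\cup C\cong A\cup C$ over $C$, with $A'\cap B\seq C$, and with no edges between $A'\backslash C$ and $B\backslash C$. I would first assemble this configuration as an abstract $K_n$-free graph and then transport it into $\H_n$ via \Cref{gext}.

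Concretely, set $A_0=A\backslash C$, take a set $A_0'$ of fresh vertices together with a bijection $A_0\func A_0'$, $a\mapsto a'$, and define a graph $G$ on $(C\cup B)\cup A_0'$ as follows: on $C\cup B$ keep the graph structure induced from $\H_n$; let $a\mapsto a'$ extend to a graph isomorphism $A_0\cup C\func A_0'\cup C$ fixing $C$ (thereby prescribing all edges among $A_0'$ and between $A_0'$ and $C$); and put no edges between $A_0'$ and $B\backslash C$. Using that $\G$ is a monster model of the random graph, realize $G$ as a subgraph of $\G$ with $C\cup B$ in its original position.

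The crux is that $G$ is $K_n$-free, and this is where I expect the only real content to lie. Suppose $K_n\cong W\seq G$. If $W\seq C\cup B$ this contradicts the $K_n$-freeness of $\H_n$. Otherwise $W$ meets $A_0'$; since $G$ has no edges between $A_0'$ and $B\backslash C$, the clique $W$ cannot contain any vertex of $B\backslash C$, so $W\seq A_0'\cup C$. But $A_0'\cup C\cong A_0\cup C=A\cup C\seq\H_n$ is $K_n$-free, a contradiction. Hence $G$ is $K_n$-free. The observation that severing the edges between $A_0'$ and $B\backslash C$ confines any would-be clique to the already-$K_n$-free copy $A_0'\cup C$ is what makes this step immediate.

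Finally, I would apply \Cref{gext} with $X=G$ and fixed parameter set $C\cup B\seq\H_n$ to obtain a graph embedding $f\colon G\func\H_n$ with $f|_{C\cup B}=\id$, and set $A'=(A\cap C)\cup f(A_0')$. The map fixing $C$ and sending $a\mapsto f(a')$ is a graph isomorphism $A\cup C\func A'\cup C$ over $C$, so $A'\equiv_C A$ by quantifier elimination. Since $f$ is injective and fixes $B$ pointwise, no $f(a')$ can lie in $B$ (otherwise $a'\in B$, contradicting freshness), so $A'\cap B\seq C$; and since $f$ is an induced embedding it sends the absent edges between $A_0'$ and $B\backslash C$ to absent edges, so there are no edges from $A'\backslash C=f(A_0')$ to $B\backslash C$. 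Therefore $A'\ind^R_C B$, as required; apart from the $K_n$-freeness verification, everything is bookkeeping about fixing $C\cup B$ and preserving (non-)edges under the embedding.
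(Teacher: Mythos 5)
Your proof is correct and takes essentially the same route as the paper: build a fresh copy of $A\backslash C$ over $C$ with no edges to $B\backslash C$, observe that any $K_n$ in the resulting graph would have to lie entirely in $BC$ or entirely in the copy of $AC$ (both $K_n$-free), and then embed over $BC$ into $\H_n$ using Proposition \ref{gext}. The only (harmless) difference is bookkeeping: the paper replaces $A\backslash(BC)$ rather than $A\backslash C$ by fresh vertices.
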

\begin{proof}
Fix $A,B,C\subset\H_n$ and enumerate $A\backslash(BC)=(a_i)_{i<\lambda}$. We define a graph $G=BC(a'_i)_{i<\lambda}\subset\G$, where each $a'_i$ is a new vertex, and
\begin{enumerate}[$(i)$]
\item for all $i,j<\lambda$, $a'_i\pR a'_j$ if and only if $a_i\pR a_j$,
\item for all $i<\lambda$ and $c\in C$, $a'_i\pR c$ if and only if $a_i\pR c$,
\item for all $i<\lambda$ and $b\in B\backslash C$, $\neg a'_iRb$.
\end{enumerate}
We claim that $G$ is $K_n$-free. Indeed, if $K_n\cong W\seq G$ then, by $(iii)$, it follows that $W\seq BC$ or $W\seq C(a'_i)_{i<\lambda}$. But $BC\subset\H_n$ so this means $W\seq C(a'_i)_{i<\lambda}$, which, by $(i)$ and $(ii)$, means $AC$ is not $K_n$-free, a contradiction. 

Therefore $G$ embeds in $\H_n$ over $BC$. Let $(a'')_{i<\lambda}$ be the image of $(a'_i)_{i<\lambda}$ and set $A'=(A\cap C)\cup(a''_i)_{i<\lambda}$. Then it is clear that $A'\equiv_C A$ and $A'\ind^R_C B$.
\end{proof}

Using full existence of $\ind^R$, we can prove the full characterization of forking and dividing in $T_n$.

\begin{theorem}\label{Tn forking ind}
Suppose $A,B,C\subset\H_n$. Then
$$
\textstyle A\ind^f_C B\Leftrightarrow A\ind^d_C B\Leftrightarrow
\begin{array}{l}
\text{$A\cap B\seq C$ and, for all $\bbar\in B\backslash C$,}\\
\text{$K_n(\bbar/AC)$ implies $K_n(\bbar/C)$.}
\end{array}
$$
\end{theorem}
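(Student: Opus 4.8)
The plan is to prove the two equivalences separately. The second equivalence, between $A\ind^d_C B$ and the displayed graph-theoretic condition, is nothing but a restatement of \Cref{div ind n}$(b)$, so no further work is needed there. For the first equivalence, the implication $A\ind^f_C B\Rightarrow A\ind^d_C B$ is the general model-theoretic fact that dividing implies forking (equivalently, nonforking implies nondividing), and uses no features of $T_n$. Everything therefore reduces to showing that $\ind^d$ implies $\ind^f$; that is, that a nondividing complete type does not fork.

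For this I would use the standard reformulation of nonforking: $\tp(A/BC)$ does not fork over $C$ if and only if for every $D\supseteq BC$ there is $A'\equiv_{BC}A$ with $A'\ind^d_C D$. This is immediate from the definition of forking, since $\tp(A/BC)$ forks over $C$ exactly when some $D\supseteq BC$ admits no nondividing completion, and a completion over $D$ has the form $\tp(A'/D)$ with $A'\equiv_{BC}A$. So, fixing $D\supseteq BC$, the task is to produce such an $A'$. To build it I would apply \Cref{fullext} with $BC$ in place of $C$ and $D$ in place of $B$, obtaining $A'\equiv_{BC}A$ with $A'\ind^R_{BC}D$, i.e. $A'\cap D\seq BC$ and no edge between $A'\backslash BC$ and $D\backslash BC$.

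It then remains to verify $A'\ind^d_C D$ via the criterion of \Cref{div ind n}$(b)$. Condition $(i)$, that $A'\cap D\seq C$, follows because $A\cap B\seq C$ (part of $A\ind^d_C B$) is expressed in $\tp(A/BC)$ and hence forces $A'\cap(B\backslash C)=\emptyset$; together with $A'\cap D\seq BC$ this gives $A'\cap D\seq C$. For condition $(ii)$, I would fix $\bbar\seq D\backslash C$ with $K_n(\bbar/A'C)$, witnessed by some $X$, and show $K_n(\bbar/C)$. If $X$ uses no new vertex of $A'$, i.e. $X\cap(A'\backslash C)=\emptyset$, then $X\seq\bbar C$ already witnesses $K_n(\bbar/C)$.

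The crux is the opposite case, where $X$ contains a new vertex $a\in A'\backslash BC$. Since every element of $X\cap\bbar$ must be joined to $a$, while there is no edge from $A'\backslash BC$ to $D\backslash BC$, all of $X\cap\bbar$ must lie in $BC$, hence in $B\backslash C$ (using $\bbar\seq D\backslash C$ and $BC\seq D$). Writing $\bbar^*=X\cap\bbar\seq B\backslash C$, the same set $X$ witnesses $K_n(\bbar^*/A'C)$. Transporting along the automorphism of $\H_n$ realizing $A'\equiv_{BC}A$—which fixes $\bbar^*$ and $C$ and carries $A'$ to $A$—yields $K_n(\bbar^*/AC)$; then $A\ind^d_C B$ gives $K_n(\bbar^*/C)$, and since $\bbar^*\seq\bbar$ this witnesses $K_n(\bbar/C)$. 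I expect this transport step—pulling a $K_n$-configuration over the new parameters $D$ back to one over $B$, where the hypothesis $A\ind^d_C B$ can be invoked—to be the essential difficulty; the remainder is bookkeeping with the definition of $n$-boundness.
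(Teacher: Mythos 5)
Your proposal is correct and follows essentially the same route as the paper: both reduce to producing, for each $D\supseteq BC$, a conjugate $A'\equiv_{BC}A$ with $A'\ind^R_{BC}D$ via Lemma \ref{fullext}, and then show that any witness $X$ of $K_n(\bbar/A'C)$ either avoids $A'\backslash BC$ (so already witnesses $K_n(\bbar/C)$) or, by the absence of edges into $D\backslash BC$, forces $X\cap\bbar\seq B\backslash C$, after which the hypothesis $A\ind^d_C B$ applies to $\bbar_*=X\cap\bbar$. The only differences are presentational (direct argument versus the paper's contrapositive, and your slightly more explicit transport along the automorphism realizing $A'\equiv_{BC}A$).
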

\begin{proof}
The second equivalence is by Theorem \ref{div ind n}; and dividing implies forking in any theory. Therefore we only need to show $A\ind^d_C B$ implies $A\ind^f_C B$. Suppose $A\nind^f_C B$. Then there is some $D\subset\H_n\backslash BC$ such that $A'\nind^d_C BD$ for any $A'\equiv_{BC}A$. By Lemma \ref{fullext}, let $A'\equiv_{BC} A$ such that $A'\ind^R_{BC} D$. By assumption, we have $A'\nind^d_C BD$.

\noindent\textit{Case 1}: $A'\cap BD\not\seq C$. We have $A'\cap BD\seq BC$ by assumption, so this means there is $b\in (A'\cap B)\backslash C$. But $A'\equiv_{BC}A$ and so $b\in (A\cap B)\backslash C$. Therefore $A\nind^d_C B$, as desired.

\noindent\textit{Case 2}: $A'\cap BD\seq C$. Then, since $A'\nind^d_C BD$, it follows from Theorem \ref{div ind n} that there is $\bbar\in BD\backslash C$ such that $\neg K_n(\bbar/C)$ and $K_n(\bbar/A'C)$. Let $X\seq A'C\bbar$ witness $K_n(\bbar/A'C)$. Note that $X\seq A'BCD$. Moreover, note also that if $X\cap(A'\backslash BC)\neq\emptyset$, then $X\seq BCD$, and so $X$ witnesses $K_n(\bbar/C)$, which is a contradiction. 

Therefore $X\cap(A'\backslash BC)\neq\emptyset$. Then we claim that $X\seq A'BC$. Indeed, otherwise there is $u\in X\cap(A'\backslash BC)$ and $v\in X\cap(D\backslash A'BC)$. Therefore $u\neq v$, $u\in A'$, and $v\in\bbar$, and so $u\pR v$, since $X$ witnesses $K_n(\bbar/A'C)$. But this contradicts that there is no edge from $A'\backslash BC$ to $D\backslash BC$. 

So we have $X\seq A'BC$. Let $\bbar_*=X\cap\bbar\in B\backslash C$. Then $\neg K_n(\bbar/C)$ implies $\neg K_n(\bbar_*/C)$, and $X$ witnesses $K_n(\bbar_*/AC)$. Therefore $A'\nind^d_C B$. Since $A'\equiv_{BC} A$, we have $A\nind^d_C B$, as desired.
\end{proof}

It is a general fact that if $\ind^d=\ind^f$ in some theory $T$, then all sets are extension bases for nonforking. Indeed, if a partial type forks over $C$ then it can be extended to a complete type that forks (and therefore divides over $C$). Therefore, by Proposition \ref{facts}$(b)$, no partial type forks over its own set of parameters.

\section{A forking and nondividing formula in $T_n$}\label{example sec}

We have shown that forking and dividing are the same for complete types in $T_n$. In this section, we show that the same result cannot be obtained for partial types, by demonstrating an example of a formula in $T_n$ that forks, but does not divide.

\begin{lemma}\label{4 indisc}
Suppose $(\bbar^l)_{l<\omega}$ is an indiscernible sequence in $\G$ such that $l(\bbar^0)=4$ and $\bbar^0$ is $K_2$-free (no edges). Then either there are $i<j$ such that $\{b^l_i,b^l_j:l<\omega\}$ is $K_2$-free, or $\bigcup_{l<\omega}\bbar^l$ is not $K_3$-free.
\end{lemma}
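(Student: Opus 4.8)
The plan is to encode the edge pattern of the indiscernible sequence and reduce the statement to a purely combinatorial fact about tournaments. First I would record that, by indiscernibility, for $l\neq m$ whether $b^l_i\pR b^m_j$ holds depends only on the pair of positions $(i,j)$ and on whether $l<m$; likewise $b^l_i\pR b^l_j$ (for $i\neq j$) holds iff $b^0_i\pR b^0_j$ does, which fails since $\bbar^0$ is $K_2$-free. Thus there are no edges within a single level. For $l<m$ I would write $i\to j$ to abbreviate $b^l_i\pR b^m_j$ (independent of the choice of $l<m$), obtaining a relation on $\{1,2,3,4\}$ which in general is not symmetric, and which will serve as the dictionary between the graph and a tournament.

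Next I would dispose of the ``vertical'' edges. If $i\to i$ for some $i$, then $b^0_i\pR b^1_i$, $b^0_i\pR b^2_i$ and $b^1_i\pR b^2_i$ all hold, so $\{b^0_i,b^1_i,b^2_i\}\cong K_3$ and $\bigcup_{l<\omega}\bbar^l$ is not $K_3$-free; the second alternative holds. So I may assume $i\not\to i$ for all $i$. Now suppose the first alternative fails, i.e.\ for every $i\neq j$ the set $\{b^l_i,b^l_j:l<\omega\}$ contains an edge. Since within-level and vertical edges are now absent, any such edge is a cross edge, and so $i\to j$ or $j\to i$. Hence between every pair of positions at least one arrow is present, and choosing one present arrow for each pair yields a tournament $T$ on $\{1,2,3,4\}$ all of whose arcs are realized by $\to$.

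The combinatorial heart is then that every tournament on four vertices contains a transitive triangle: the out-degrees of $T$ sum to $\binom{4}{2}=6>4$, so some vertex $p$ has out-degree at least $2$, and for two out-neighbours $q,r$ the single arc between them, say $q\to r$, gives $p\to q$, $q\to r$, $p\to r$. Finally I would convert this into an honest triangle: picking levels $l_1<l_2<l_3$, the vertices $b^{l_1}_p,b^{l_2}_q,b^{l_3}_r$ are pairwise adjacent, since $p\to q$, $q\to r$ and $p\to r$ witness the three cross edges $b^{l_1}_p\pR b^{l_2}_q$, $b^{l_2}_q\pR b^{l_3}_r$ and $b^{l_1}_p\pR b^{l_3}_r$ respectively; hence $\bigcup_{l<\omega}\bbar^l$ is not $K_3$-free. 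The step needing the most care—the main obstacle—is setting up the dictionary correctly: checking that any triangle in $\bigcup_{l<\omega}\bbar^l$ must live on three \emph{distinct} levels (so that it corresponds exactly to a transitive triangle of $\to$, with the absence of vertical edges ruling out repeated positions), and noting that the hypothesis $l(\bbar^0)=4$ is precisely what forces an out-degree-$2$ vertex, whereas length $3$ would permit a directed $3$-cycle and hence a genuine counterexample. Once this reduction is in place, the tournament count finishes the argument.
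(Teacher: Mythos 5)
Your proof is correct. The overall skeleton matches the paper's: both arguments use indiscernibility to reduce the edge pattern to a relation on the four positions, split on whether some pair of positions carries no cross edges (giving the $K_2$-free pair) and, in the remaining case, extract a ``transitive triangle'' $p\to q$, $q\to r$, $p\to r$ which is then realized as a genuine $K_3$ on three increasing levels. Where you differ is in the combinatorial engine: the paper encodes the arrow directions as a $2$-coloring $f$ of pairs $i<j$ and proves by explicit triangle-hunting that $f(i,j)=f(j,k)$ for all $i<j<k$ (so $f$ is constant, and constancy yields the transitive triangle), whereas you pass to a tournament on $\{1,2,3,4\}$ and invoke the out-degree count $\sum \deg^+ = 6 > 4$ to find a vertex of out-degree $2$ and hence a transitive triangle directly. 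Your version is shorter, avoids the four-way case analysis, and makes transparent why length $4$ is needed (length $3$ admits the cyclic tournament, which has no transitive triangle and indeed no associated $K_3$). You also explicitly dispose of the ``vertical'' edges $b^l_i\pR b^m_i$, which the paper's final paragraph silently ignores: since the sequence lives in $\G$ rather than $\H_n$, Lemma \ref{no vertical} does not apply, and without ruling out (or absorbing into the second alternative) such edges one cannot conclude that $\{b^l_i,b^l_j:l<\omega\}$ is $K_2$-free. So on that point your write-up is actually more complete than the paper's.
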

\begin{proof}
Let $B=\bigcup_{l<\omega}\bbar^l$. Suppose first that for all $i<j$, we have $b^0_i\pR b^1_j$ or $b^1_i\pR b^0_j$. Let 
$$
f:\{(i,j):1\leq i<j\leq 4\}\func\{0,1\}\text{ such that }f(i,j)=0\Leftrightarrow b^0_iRb^1_j.
$$
\textit{Claim}: If $B$ is $K_3$-free then for all $i<j<k$, $f(i,j)=f(j,k)$.\\
\textit{Proof}: Suppose not.\\
\textit{Case 1}: $f(i,j)=1$ and $f(j,k)=0$. If $f(i,k)=0$ then by indiscernibility we have $b^1_i\pR b^0_j$, $b^0_j\pR b^2_k$ and $b^1_i\pR b^2_k$; and so $\{b^1_i,b^0_j,b^2_k\}\cong K_3$. If $f(i,k)=1$ then by indiscernibility we have $b^2_i\pR b^0_j$, $b^0_j\pR b^1_k$ and $b^2_i\pR b^1_k$; and so $\{b^2_i,b^0_j,b^1_k\}\cong K_3$.\\
\textit{Case 2}: $f(i,j)=0$ and $f(j,k)=1$. If $f(i,k)=0$ then $\{b^0_i,b^2_j,b^1_k\}\cong K_3$. Otherwise, if $f(i,k)=1$ then $\{b^1_i,b^2_j,b^0_k\}\cong K_3$. \claim\\
\indent By the claim, if $f(1,2)=0$ then $f(2,3)$, $f(3,4)$, $f(1,3)$ and $f(2,4)$ are all 0; and so $\{b^0_1,b^1_2,b^2_3\}\cong K_3$. If $f(1,2)=1$ then $f(2,3)$, $f(3,4)$, $f(1,3)$, and $f(2,4)$ are all 1; and so $\{b^2_1,b^1_2,b^0_3\}\cong K_3$. In any case we have shown that $B$ is not $K_3$-free.\\
\indent So we may assume that there is some $i<j$ such that $\neg b^0_i\pR b^1_j$ and $\neg b^1_i\pR b^0_j$. By indiscernibility, and since $\neg b^0_i\pR b^0_j$, it follows that $\{b^l_i,b^l_j:l<\omega\}$ is $K_2$-free.
\end{proof}

\begin{theorem}\label{fork nondivide example}
Let $C,\bbar=(b_1,b_2,b_3,b_4)\subset\H_n$ such that $C\cong K_{n-3}$, $\bbar \pR C$, and $\bbar$ is $K_2$-free. For $i<j$, let $\vphi_{i,j}(x,b_j,b_j)=``x\pR Cb_ib_j"$. Set $\vphi(x,\bbar)=\bigvee_{i<j}\vphi_{i,j}(x,b_i,b_j)$. Then $\vphi(x,\bbar)$ forks over $C$ but does not divide over $C$.
\end{theorem}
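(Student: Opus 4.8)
The plan is to prove the two assertions separately. Forking will follow almost for free, because $\vphi(x,\bbar)$ is \emph{literally} a disjunction of formulas each of which divides over $C$. Nondividing is the real work: I must show that along \emph{every} $C$-indiscernible sequence through $\bbar$, at least one of the six pairs coming from $\bbar$ remains globally non-adjacent, so that a single $x$ can simultaneously realize all of $\{\vphi(x,\bbar^l):l<\omega\}$.

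For the forking half, I first show that each disjunct $\vphi_{i,j}(x,b_i,b_j)=``x\pR Cb_ib_j"$ divides over $C$. Viewing $\vphi_{i,j}\in\cL_R(C)$ with parameters $b_i,b_j$, I apply Corollary \ref{Tn dividing 2} with $B=\{b_i,b_j\}$. Since $\bbar$ is $K_2$-free we have $\neg b_i\pR b_j$, and since $|BC|=(n-3)+2=n-1<n$ there is no $B_0\seq BC$ of size $n$, so $\neg K_n(B/C)$ holds automatically. For an optimal solution $a\models\vphi_{i,j}(x,b_i,b_j)$, the set $B_0=C\cup\{a,b_i,b_j\}$ has exactly $n$ elements; here $C\cup\{a\}$ is a clique (as $C\cong K_{n-3}$ and $a\pR C$), and each of $b_i,b_j$ is adjacent to all of $C\cup\{a\}$ (using $\bbar\pR C$ and $a\pR b_i,b_j$), so $B_0$ witnesses $K_n(B/Ca)$ and $\vphi_{i,j}$ divides over $C$. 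Now take $D=C\bbar$ in the definition of forking: any complete $p\in S(D)$ extending $\vphi(x,\bbar)=\bigvee_{i<j}\vphi_{i,j}$ must contain one of the disjuncts $\vphi_{i,j}$ (the disjuncts are formulas over $D$, and $p$ is complete), hence $p$ divides over $C$ as it extends a dividing formula. Thus $\vphi(x,\bbar)$ forks over $C$.

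For the nondividing half, I fix an arbitrary $C$-indiscernible sequence $(\bbar^l)_{l<\omega}$ with $\bbar^0=\bbar$ and show $\{\vphi(x,\bbar^l):l<\omega\}$ is consistent with $T_n$. The crucial observation is that $U:=\bigcup_{l<\omega}\bbar^l$ is $K_3$-free: any triangle $\{u,v,w\}\seq U$ would, together with $C$ (which is $K_{n-3}$ and joined to every $b^l_p$ by $C$-indiscernibility together with $\bbar\pR C$), form a copy of $K_n$ inside $\H_n$, contradicting $K_n$-freeness. Regarding $(\bbar^l)$ as an indiscernible sequence in $\G$, \Cref{4 indisc} then forces its first alternative: there are $i<j$ with $\{b^l_i,b^l_j:l<\omega\}$ entirely edgeless. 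I then realize the $R$-type asserting $x\pR C$ and $x\pR b^l_i,b^l_j$ for all $l$; in the resulting optimal graph the largest clique is $C$ together with $x$ and a single one of the $b$'s, i.e.\ a $K_{n-1}$, so the graph is $K_n$-free and the type is realized by some $x\in\H_n$. This $x$ satisfies $\vphi_{i,j}(x,b^l_i,b^l_j)$, and hence $\vphi(x,\bbar^l)$, for every $l$, which shows consistency and therefore that $\vphi(x,\bbar)$ does not divide over $C$.

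I expect the nondividing half to be the main obstacle, and within it the pivotal step is the reduction to \Cref{4 indisc}: one must recognize that the clique $C\cong K_{n-3}$ adjacent to all the $b^l_p$ promotes $K_n$-freeness of $\H_n$ into $K_3$-freeness of $\bigcup_{l}\bbar^l$, which is precisely what eliminates the second alternative of the lemma and yields the globally non-adjacent pair needed to build a common realization.
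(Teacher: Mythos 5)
Your proposal is correct and follows essentially the same route as the paper: each disjunct $\vphi_{i,j}$ divides over $C$ (so the disjunction forks), and for nondividing the clique $C\cong K_{n-3}$ forces $\bigcup_l\bbar^l$ to be $K_3$-free, whence \Cref{4 indisc} produces an edgeless pair $\{b^l_i,b^l_j:l<\omega\}$ over which a common realization exists. Your write-up merely adds detail the paper leaves implicit (the explicit witness $C\cup\{a,b_i,b_j\}$ for $K_n(b_i,b_j/Ca)$, and the completeness argument showing a complete type extending a disjunction must contain a disjunct).
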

\begin{proof}
For any $i\neq j$, $|Cb_ib_j|=n-1$, and so $\neg K_n(b_i,b_j/C)$. Moreover, we clearly have that if $a\models\vphi(x,b_i,b_j)$ then $K_n(b_i,b_j/Ca)$. By Theorem \ref{Tn dividing}, $\vphi_{i,j}(x,b_i,b_j)$ divides over $C$, and therefore $\vphi(x,\bbar)$ forks over $C$.

Let $(\bbar^l)_{l<\omega}$ be $C$-indiscernible, with $\bbar^0=\bbar$. If there is some $K_3\cong W\subset\bigcup_{l<\omega}\bbar^l$ then $K_n\cong CW$, since $\bbar^l\equiv_C\bbar$ for all $l<\omega$ implies $C\pR W$. Therefore $\bigcup_{l<\omega}\bbar^l$ is $K_3$-free and so, by Lemma \ref{4 indisc}, there are $i<j$ such that $B:=\{b^l_i,b^l_j:l<\omega\}$ is $K_2$-free. Since $|C|=n-3$, it follows that $BC$ is $K_{n-1}$-free and so there is some $a\in\H$ such that $a\pR (BC)$. But then $a\models\{\vphi_{i,j}(x,b^l_i,b^l_j):l<\omega\}\seq\{\vphi(x,\bbar^l):l<\omega\}$. By Theorem \ref{facts}, $\vphi(x,\bbar)$ does not divide over $A$.
\end{proof}

\section{Final remarks}\label{fin} 

 We have shown that $T_n$ is an $\NSOP_4$ theory in which all sets are extension bases for nonforking, but forking and dividing are not always the same. This only partially answers the question of how the results of \cite{ChKa} extend to theories with $\TP_2$. In partictular, forking is the same as dividing for complete types in $T_n$, which means there is good behavior of nonforking beyond just the fact that all sets are extension bases. This leads to the following amended version of the main question.

\begin{question} Suppose that in some theory all sets are extension bases for nonforking.
\begin{enumerate}
\item Does $\NSOP_3$ imply forking and dividing are the same for partial types? 
\item For what classes of theories do we have $\ind^f=\ind^d$?
\end{enumerate}
\end{question}

\bibliography{D:/Gabe/UIC/gconant/Math/BibTeX/biblio}
\bibliographystyle{amsplain}
\end{document}